\newtheorem{lem}{Lemma}
\newtheorem{conj}{Conjecture}
\newtheorem{assum}{Assumption}
\def\d{\,{\rm{d}}}
\def\o{\,{\omega}}
\title[Series for the eigenvalues of the GKW operator]
{Recursive construction of a series converging to the eigenvalues of the Gauss-Kuzmin-Wirsing operator}
\author[Giedrius Alkauskas]{Giedrius Alkauskas}
\begin{document}
\begin{abstract} Based on the technique previously developed by the author,
we present a conjecture which claims that the reciprocal of the
$n$th largest (in absolute value) eigenvalue of the
Gauss-Kuzmin-Wirsing operator is equal to the sum of a certain
infinite series. This series is constructed recurrently. It consists
of rational functions with integer coefficients in two variables
$\mathbf{X},\mathbf{Y}$, specialized at $\mathbf{X}=n$ and
$\mathbf{Y}=2^{n}$. This gives a strong evidence to the conjecture
of Mayer and Roepstorff that eigenvalues have alternating sign.
Further, a very similar recursion yields a series for the dominant
eigenvalue of the Mayer-Ruelle operator.
\end{abstract}
\maketitle
\begin{center}
\rm Keywords: Continued fractions, Gauss-Kuzmin-Wirsing operator, Mayer-Ruelle operator, structure constants, eigenvalues, pseudo-zeta function, rational functions
\end{center}
\begin{center}
\rm Mathematics subject classification (2010). Primary: 47A10, 11A55, 11Y60; Secondary: 32A05.
\end{center}
\section{Introduction and conjecture}
\footnotetext[1]{The author gratefully acknowledges support from the
Austrian Science Fund (FWF) under the project Nr. P20847-N18.} Let
$\mathbb{D}$ be a disc $\{x\in\mathbb{C}:|x-1|<\frac{3}{2}\}$. Let
$\mathbf{V}$ be a Banach space of functions which are analytic in
$\mathbb{D}$ and are continuous in its closure. We equip this space
with the supremum norm. \emph{The Gauss-Kuzmin-Wirsing operator} is
defined for functions $f\in\mathbf{V}$ by
\cite{khinchin,knuth,wirsing}
\begin{eqnarray}
\mathcal{L}[f(t)](x)=\sum\limits_{m=1}^{\infty}\frac{1}{(x+m)^2}f\Big{(}\frac{1}{x+m}\Big{)}.\label{gkw}
\end{eqnarray}
Our main interest in this paper is the \emph{point spectrum} of this
operator. In fact, the operator $\mathcal{K}[\hat{f}]$, defined by
\begin{eqnarray*}
\mathcal{K}[\hat{f}(t)](x)=\int\limits_{0}^{\infty}\frac{J_{1}(2\sqrt{xy})}{\sqrt{(e^{x}-1)(e^{y}-1)}}\cdot
\hat{f}(y)\d y,
\end{eqnarray*}
for $\hat{f}$ belonging to the Hilbert space
$L^2(\mathbb{R}_{+},m)$, $\d m(y)=\frac{y}{e^{y}-1}\d y$, has the
same point spectrum \cite{babenko} (it is easy to pass from $f$ to
$\hat{f}$ using the Borel transform). As was shown in
\cite{mayer1,mayer2}, the latter operator is compact, is of trace
class and it is nuclear of order $0$. Thus, it possesses the
eigenvalues $\lambda_{n}$, $n\in\mathbb{N}$, which are real numbers,
$|\lambda_{n}|\geq |\lambda_{n+1}|$, $\lambda_{1}=1$, and
$\sum_{n=1}^{\infty}|\lambda_{n}|^{\epsilon}<+\infty$ for every
$\epsilon>0$.\\

The Gauss-Kuzmin-Wirsing operator is intricately related with the
Gauss map $F(x)=\{1/x\}$, $x\in(0,1]$, $F(0)=0$ (here $\{\star\}$
stands for the fractional part). Let $F^{(1)}=F$, and
$F^{(k)}=F\circ F^{(k-1)}$ for $k\geq 2$. As is now well-known (due
to important contributions by Gauss, Kuzmin, L\'{e}vy, Wirsing,
Babenko, Babenko and Jur'ev, Mayer) we have
\begin{eqnarray*}
\mu(a\in[0,1]:F^{(k)}(a)<x)=\frac{\log(1+x)}{\log2}+\sum\limits_{n=2}^{\infty}\lambda^{k}_{n}\Phi_{n}(x).
\end{eqnarray*}
Here $\mu(\star)$ stands for the Lebesgue measure, and for each
$n\geq 2$, the function $\Phi_{n}(x)$ is defined in the cut plane
$x\in\mathbb{C}\setminus(-\infty,-1]$, it satisfies the boundary
conditions $\Phi_{n}(0)=\Phi_{n}(1)=0$ ($n\geq 2$), the regularity
condition $\Phi'_{n}(x)\rightarrow 0$ if ${\rm
dist}(x,(-\infty,-1])\rightarrow\infty$, and the functional equation
\begin{eqnarray}
\Phi_{n}(x+1)-\Phi_{n}(x)=\frac{1}{\lambda_{n}}\cdot\Phi_{n}\Big{(}\frac{1}{x+1}\Big{)}.\label{vienas}
\end{eqnarray}
Thus, $\Phi_{1}(x)=\frac{\log(1+x)}{\log 2}$. The eigenfunctions of $\mathcal{L}$ are then given by $\Phi'_{n}(x)$, $n\in\mathbb{N}$. More details can be found in \cite{babenko,knuth,wirsing}. \\

The nature of the eigenvalues $\lambda_{n}$ is unknown. It is unanimously believed that these constants are unrelated to other most important constants in mathematics; in particular, it is expected that they are neither algebraic numbers nor periods (periods are numbers like $\pi$, Catalan constant, $\zeta(3)$, and so on). Moreover, though now more that $480$ digits of $\lambda_{2}$ have been calculated \cite{briggs}, there is no rigorous result known which guarantees that the digits of $\lambda_{2}$ we calculate are the true ones. Though, as noted in \cite{flajolet1}, one can (theoretically) get certificates at least for $\lambda_{2}$. Concerning calculations of values of $\lambda_{n}$ for $n\geq 2$, one can only trust heuristic arguments, which are absolutely likely to be true, as numerical calculations suggest \cite{flajolet1,knuth,macleod,zagier1}. We note that the first few digits of $\lambda_{2}$ can be calculated rigorously \cite{mayer1}.\\
\indent On the other hand, the trace of the operator $\mathcal{L}$ can be given explicitly. As an aside, there exist formulas for $\mathrm{Tr}(\mathcal{L}^{k})$ for $k\in\mathbb{N}$ \cite{zagier2,mayer1,mayer2}. These formulas are crucial in Mayer's proof \cite{mayer3} that the Fredholm determinant $\mathrm{det}(1-\mathcal{L}_{2s}^{2})$ is equal to the Selberg zeta function for the full modular group. Here $\mathcal{L}_{2s}$ is \emph{the Mayer-Ruelle operator}, which is given by (\ref{mr}) below. As was shown in \cite{mayer1} (see also \cite{flajolet2,finch}), we have
\begin{eqnarray*}
\mathrm{Tr}(\mathcal{L})=\sum\limits_{n=1}^{\infty}\lambda_{n}=\int\limits_{0}^{\infty}\frac{J_{1}(2x)}{e^{x}-1}\d x
=\sum\limits_{m=1}^{\infty}\frac{1}{1+\xi_{m}^{2}};\text{ here }\xi_{m}=\frac{m+\sqrt{m^2+4}}{2}.
\end{eqnarray*}
The last sum can by expanded in terms of inverse powers of $m$. Thus, this implies \cite{flajolet2,finch}
\begin{eqnarray*}
\mathrm{Tr}(\mathcal{L})=\frac{1}{2}-\frac{1}{2\sqrt{5}}+\frac{1}{2}\sum\limits_{k=1}^{\infty}(-1)^{k-1}\binom{2k}{k}(\zeta(2k)-1)=0.7711255236_{+}.
\end{eqnarray*}
The constants $\lambda_{n}$ have received a considerable amount of attention in recent decades. Nevertheless, there are three outstanding unresolved problems. As was said before, we henceforth arrange the eigenvalues according to their absolute value $|\lambda_{1}|\geq|\lambda_{2}|\geq\cdots$. Of course, in case $\lambda_{n}=\pm\lambda_{n+1}$ for some $n$, this arrangement is not uniquely defined. Despite this, we have
\begin{conj}The following three statements are true:
\begin{itemize}
\item{{\rm Simplicity}}. The eigenvalues are simple. Moreover, $|\lambda_{n}|>|\lambda_{n+1}|$.
\item{{\rm Sign}}. The eigenvalues have alternating sign: $(-1)^{n+1}\lambda_{n}>0$.
\item{{\rm Ratio}}. There exists a limit $\lim\limits_{n\rightarrow\infty}\frac{\lambda_{n}}{\lambda_{n+1}}=-\frac{3+\sqrt{5}}{2}$.
\end{itemize}
\end{conj}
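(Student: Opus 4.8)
The plan is to treat the three assertions non‑uniformly. For \textbf{Sign} and \textbf{Ratio} I would work directly with the functional equation \eqref{vienas} and turn the determination of $\lambda_{n}$ into the solution of an explicit, recursively generated linear problem; for \textbf{Simplicity} (and, as an alternative route, for \textbf{Sign}) I would instead try to exhibit a sign‑regularity property of the Babenko kernel of $\mathcal{K}$.

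First, the series approach. Fix $n$ and expand the candidate eigenfunction $\Phi_{n}$ over a family $\{\psi_{j}\}$ adapted to $\mathbb{D}$ and to the regularity condition at infinity, normalized so that $\psi_{j}\bigl(1/(x+1)\bigr)$ re‑expands over the $\psi_{i}$ with explicit coefficients; powers of $2$ enter here because the branch $x\mapsto 1/(x+1)$ has its relevant pivot at $x=1$. Substituting $\Phi_{n}=\sum_{j}a_{j}\psi_{j}$ into \eqref{vienas} and matching coefficients converts the eigenproblem into an infinite linear system $M(z)\mathbf{a}=0$ with $z=1/\lambda_{n}$, where $M(z)=M_{0}(z)+M_{1}(z)$ with $M_{0}$ diagonal and explicitly invertible — it should encode the unperturbed spectrum of the local transfer operator at the fixed point of $x\mapsto 1/(1+x)$, whose multiplier has modulus $\tfrac{3+\sqrt5}{2}$ — and $M_{1}$ a compact perturbation. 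Solving $M(z)\mathbf{a}=0$ by a Schur‑complement/Neumann iteration yields the $n$th root $1/\lambda_{n}$ as a series whose $k$th term is a fixed rational function $R_{k}(\mathbf{X},\mathbf{Y})$ with integer coefficients evaluated at $\mathbf{X}=n$, $\mathbf{Y}=2^{n}$, with leading behaviour of size $\asymp\bigl(\tfrac{3+\sqrt5}{2}\bigr)^{n}$ and sign $(-1)^{n+1}$. From here \textbf{Sign} follows \emph{if} every $R_{k}(n,2^{n})$ carries the sign $(-1)^{n+1}$ (a term‑by‑term positivity statement for the iteration), and \textbf{Ratio} follows by isolating the leading term and showing the tail is $o\bigl((\tfrac{3+\sqrt5}{2})^{n}\bigr)$, which forces $\lambda_{n}/\lambda_{n+1}\to-\tfrac{3+\sqrt5}{2}$.

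For \textbf{Simplicity} and an alternative proof of \textbf{Sign}: since $\mathcal{K}$ is self‑adjoint and compact its eigenvalues are real, and by the Gantmacher–Krein theory, strict sign‑regularity of its kernel with sign sequence $\varepsilon_{r}=(-1)^{r(r-1)/2}$ would give at once simplicity, $|\lambda_{r}|>|\lambda_{r+1}|$, and $(-1)^{r+1}\lambda_{r}>0$. Stripping the positive factor $\bigl((e^{x}-1)(e^{y}-1)\bigr)^{-1/2}$ (which does not affect sign‑regularity) reduces this to a sign‑regularity statement for $J_{1}(2\sqrt{xy})/\sqrt{xy}=\sum_{k\ge0}(-1)^{k}\tfrac{(xy)^{k}}{k!\,(k+1)!}$, the natural analogue of the strictly sign‑regular kernel $e^{-xy}$, whose $r\times r$ determinants have sign exactly $(-1)^{r(r-1)/2}$.

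The main obstacle is that this clean route does not actually work: $J_{1}(2\sqrt{xy})$ has zeros and changes sign, so already the $1\times1$ minors of the kernel lack a constant sign, and the kernel is \emph{not} sign‑regular in the strict sense. One would therefore need to locate the ``right'' kernel on which a total‑positivity structure survives — for an iterate $\mathcal{L}^{m}$, or in the analytic model on $\mathbb{D}$ — and I do not see how to do that. In the series approach the difficulty is more fundamental: proving that the recursively defined series converges \emph{and sums to $1/\lambda_{n}$} (rather than to some neighbouring spectral branch) appears to require a priori separation of the $\lambda_{n}$, which is part of what is being asserted. Consequently I would expect to be able only to establish the recursion rigorously, prove term by term that each $R_{k}(n,2^{n})$ has the predicted sign for small $k$, and verify the first instances numerically — that is, to obtain precisely the kind of strong evidence announced in the abstract, not a full proof.
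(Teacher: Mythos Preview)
There is no proof in the paper to compare against: the statement you were given is \emph{Conjecture~1}, which the paper explicitly presents as a list of ``three outstanding unresolved problems'' attributed to Babenko, Mayer--Roepstorff, and MacLeod. The paper makes no attempt to prove any of the three parts; its contribution is the separate Conjecture~2 (the series \eqref{main}), which it offers as \emph{evidence} for the Sign conjecture, not as a proof. You recognize this yourself in your final paragraph, and your conclusion --- that one can only obtain ``precisely the kind of strong evidence announced in the abstract, not a full proof'' --- is exactly the status the paper assigns to the matter.

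That said, it is worth noting where your sketch touches and diverges from the paper. Your ``series approach'' aims at the same endpoint as the paper's Conjecture~2 (expressing $1/\lambda_{n}$ as $\sum_{k}R_{k}(n,2^{n})$ with $R_{k}$ rational with integer coefficients), but your proposed mechanism --- expanding $\Phi_{n}$ in a basis adapted to $\mathbb{D}$ and solving $M(z)\mathbf{a}=0$ by Schur complement --- is not what the paper does. The paper instead introduces a one-parameter deformation $\mathcal{D}_{\omega}$ of the GKW operator via the tree~\eqref{itf}, observes that at $\omega=2$ the eigenproblem collapses to finite-dimensional linear algebra with $\lambda_{n}(2)=2^{n+1}-2$, and then formally Taylor-expands the eigenfunction and eigenvalue in powers of $\omega-2$; the recursion~\eqref{calc} records the coefficients of that expansion. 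The circularity you flag --- that identifying the limit of the series with the \emph{$n$th} eigenvalue presupposes the very separation one wants to prove --- is also present in the paper and is the reason its identity \eqref{main} is stated as a conjecture (resting on Assumption~1) rather than a theorem. Your Gantmacher--Krein route is not discussed in the paper at all; your own observation that $J_{1}(2\sqrt{xy})$ changes sign, so the Babenko kernel is not sign-regular, is correct and is the real obstruction there.
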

\noindent The first conjecture was raised by Babenko \cite{babenko},
the second conjecture can be attributed to Mayer and Roepstorff
\cite{mayer2}, and the last one most likely was raised by MacLeod
\cite{macleod}, and seconded by Flajolet and Vall\'{e}e
\cite{flajolet1}. It has the following explanation. The spectrum of
the operator
\begin{eqnarray*}
\mathcal{L}_{0}[f(t)](x)=\frac{1}{(x+1)^{2}}f\Big{(}\frac{1}{x+1}\Big{)},\quad f\in\mathbf{V},
\end{eqnarray*}
is given by $(-1)^{n}\phi^{-2n-2}$, $n\in\mathbb{N}_{0}$, where $\phi=\frac{\sqrt{5}+1}{2}$. It is expected that the terms in (\ref{gkw}) for $m\geq 2$ act only as small perturbations to $\mathcal{L}_{0}$. Of course, the ``Sign" and ``Simplicity" conjectures follow from the ``Ratio" conjecture for sufficiently large $n$ (provided it is effective and we can verify these conjectures for the first few values of $n$). \\

 The main target of this paper is to pose yet another conjecture. It concerns the exact values of the real numbers $\lambda_{n}$. It appears that one can interpolate the whole collection of eigenvalues $\lambda_{n}$. For this purpose, consider the following \\

\noindent \textbf{Procedure. }For integers $p,t,j\geq 0$, let us define $A_{p,t}(\mathbf{X},\mathbf{Y})$ and $\Psi_{j}(\mathbf{X},\mathbf{Y})$ (which are rational functions with integer coefficients, and in fact $\mathrm{denom}(A_{p,t}(\mathbf{X},\mathbf{Y}))$ and $\mathrm{denom}(\Psi_{j}(\mathbf{X},\mathbf{Y}))\in\mathbb{Q}[\mathbf{Y}]$), by
\begin{eqnarray*}
\Psi_{0}(\mathbf{X},\mathbf{Y})=2\mathbf{Y}-2,\quad A_{0,0}(\mathbf{X},\mathbf{Y})=1,
\end{eqnarray*}
and then recurrently by
\begin{eqnarray}
& &(2^{p}\cdot\mathbf{Y}-2^{t})\cdot A_{p,t}(\mathbf{X},\mathbf{Y})\nonumber\\
&=&\sum\limits_{k=0}^{p}\sum\limits_{j=0}^{\min\{p-k,t\}}\sum\limits_{i=0}^{t-j}
\Psi_{j}(\mathbf{X},\mathbf{Y})\cdot A_{k,i}(\mathbf{X},\mathbf{Y})\cdot\binom{\mathbf{X}+p-i-j}{p-j-k}\underline{\binom{\mathbf{X}+k-i-1}{t-i-j}}(-1)^{k+i}2^{i+j-1}\nonumber\\
&+&\sum\limits_{k=0}^{p-1}\sum\limits_{i=\max\{0,k+t-p\}}^{t-1}A_{k,i}(\mathbf{X},\mathbf{Y})\cdot
\frac{\mathbf{X}+p-t}{t-i}\binom{\mathbf{X}+p-i-1}{p-k-1}\binom{p-k-1}{p+i-k-t}(-1)^{p+k}2^{i}\nonumber\\
&+&\sum\limits_{k=0}^{p-1}A_{k,t}(\mathbf{X},\mathbf{Y})\cdot
\binom{\mathbf{X}+p-t}{p-k}(-1)^{p+k}2^{t}.\label{calc}
\end{eqnarray}
(Why we underline one binomial coefficient will be explained in the end of this section). This recursion works as follows. Fix $N\in\mathbb{N}$. We will calculate the functions $A_{p,t}$ and $\Psi_{j}$ for $0\leq p,t\leq N$ and $0\leq j\leq N$. Let $P\leq N$. Suppose we have already calculated them for $0\leq p\leq P-1$ and $0\leq t\leq N$, $0\leq j\leq P-1$. The above recursion allows to calculate $A_{P,t}$ for $0\leq t\leq P-1$. When $(p,t)=(P,P)$, this recursion \emph{a priori} contains two unknowns: $\Psi_{P}$ and $A_{P,P}$. Nevertheless, the coefficient at $A_{p,t}$ of this recurrence is equal to $(2^{p}\mathbf{Y}-\mathbf{Y}(-1)^{p-t}2^{t}-2^{t}+(-1)^{p-t}2^{t})$, which vanishes for $(p,t)=(P,P)$. Thus, since for $(p,t)=(P,P)$ the coefficient at $\Psi_{P}$ is $2^{P-1}$, we find the unique value of $\Psi_{P}$. We will later show that at this stage the function $A_{P,P}$ can be defined arbitrarily. We choose
\begin{eqnarray*}
A_{P,P}(\mathbf{X},\mathbf{Y})=0\text{ for }P\geq 1.
\end{eqnarray*}
Since now $\Psi_{P}$ and $A_{P,P}$ have been identified, we use the recurrence (\ref{calc}) to find $A_{P,t}$ for $P<t\leq N$. Consequently, according to this procedure, we can uniquely determine the rational functions $A_{p,t}$ and $\Psi_{j}$. \\
\indent At the first glace, the recurrence  (\ref{calc}) has nothing to do with continued fractions. Nevertheless, it definitely does, as is expressed by the following
\begin{conj}Let us define the rational functions $\Psi_{j}(\mathbf{X},\mathbf{Y})$, $j\geq 0$, as just described. Suppose, $1=|\lambda_{1}|\geq|\lambda_{2}|\geq|\lambda_{3}|\geq\cdots$ are the eigenvalues of the Gauss-Kuzmin-Wirsing operator, ordered by their absolute value. Then
\begin{eqnarray}
\fbox{$\displaystyle\frac{(-1)^{n+1}}{\lambda_{n}}=\sum\limits_{j=0}^{\infty}(-1)^{j}\Psi_{j}(n,2^{n})$}
\label{main}
\end{eqnarray}
and the series converges absolutely.
\label{conj2}
\end{conj}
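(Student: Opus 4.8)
\medskip
\noindent We do not have a proof of Conjecture \ref{conj2}; let us, however, sketch the route by which we expect it could be established. The plan is to \emph{derive} the recursion (\ref{calc}) from the functional equation (\ref{vienas}) and then to control the resulting series. The starting point is the ``structure constants'' technique: attach to the $n$th eigenfunction a canonical doubly-indexed expansion, with coefficients $a^{(n)}_{p,t}$, obtained by working with (\ref{vienas}) \emph{itself} --- a one-term identity in the two substitutions $x\mapsto x+1$ and $x\mapsto 1/(x+1)$ --- rather than with $\mathcal{L}f=\lambda f$; this is precisely what keeps the coefficients rational (integral after clearing denominators) and free of Riemann zeta values. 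Organising the $a^{(n)}_{p,t}$ into a bivariate generating function $\mathcal{A}_{n}(z,w)=\sum_{p,t\ge 0}a^{(n)}_{p,t}z^{p}w^{t}$, the identity (\ref{vienas}) --- together with $\Phi_{n}(0)=\Phi_{n}(1)=0$ and the regularity at $-\infty$ --- should turn into (i) a closed recursion for the $a^{(n)}_{p,t}$ whose coefficients are exactly the binomial ``structure constants'' in (\ref{calc}) (loosely, these encode the action of words of length $n$ in the two generators, of which there are $2^{n}$), and (ii) a single scalar ``eigenvalue equation'', phrasable through the pseudo-zeta function attached to $\Phi_{n}$, that pins down $\lambda_{n}$. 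Both (i) and (ii) depend on $n$ only through $n$ itself --- entering linearly, whence the binomials $\binom{\mathbf{X}+\cdots}{\cdots}$ --- and through $2^{n}$; replacing $n$ by $\mathbf{X}$ and $2^{n}$ by $\mathbf{Y}$ then yields precisely (\ref{calc}), with $A_{p,t}$ the universal version of $a^{(n)}_{p,t}$ and $(-1)^{j}\Psi_{j}$ the universal $j$th contribution to (ii).

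Concretely, the argument would split into four steps. \textbf{Step 1} (the dictionary): prove $A_{p,t}(n,2^{n})=a^{(n)}_{p,t}$ under the normalisation $A_{0,0}=1$, by matching the three sums on the right of (\ref{calc}), term by term, against the contributions of $x\mapsto x+1$, of the reciprocal substitution, and of the two boundary values; here the base cases $\Psi_{0}=2\mathbf{Y}-2$ and $A_{0,0}=1$ must be checked against the zeroth-order model $\mathcal{L}_{0}$ (whose spectrum is $(-1)^{m}\phi^{-2m-2}$) and against the known case $\Phi_{1}(x)=\log(1+x)/\log 2$, $\lambda_{1}=1$. \textbf{Step 2} (well-posedness): verify, as noted in the Procedure above, that the coefficient of $A_{p,t}$ in (\ref{calc}) equals $2^{p}\mathbf{Y}-\mathbf{Y}(-1)^{p-t}2^{t}-2^{t}+(-1)^{p-t}2^{t}$, which vanishes exactly on the diagonal $p=t$, while the coefficient of $\Psi_{P}$ at $(p,t)=(P,P)$ is $2^{P-1}$; then prove the (presumably necessary) gauge-invariance lemma that the resulting $\Psi_{j}$ do not depend on the free choices $A_{P,P}=0$, so that the series in (\ref{main}) is intrinsically defined. \textbf{Step 3} (identification): show that $\sum_{j\ge 0}(-1)^{j}\Psi_{j}(\mathbf{X},\mathbf{Y})$ is exactly the left-hand side of equation (ii), which after the substitution reads $(-1)^{\mathbf{X}+1}/\lambda$; evaluating at $(\mathbf{X},\mathbf{Y})=(n,2^{n})$ and invoking the fact that $\mathcal{L}$ has a genuine point spectrum with $\lambda_{1}=1$ then yields (\ref{main}).

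\textbf{Step 4} (convergence --- the analytic heart) calls for two more things. First, \emph{pole avoidance}: the denominators of $\Psi_{j}$ lie in $\mathbb{Q}[\mathbf{Y}]$ and are products of factors $2^{a}\mathbf{Y}-2^{b}$, with zeros at $\mathbf{Y}=2^{b-a}$, so one must show that for every $j$ the exponents $b-a$ that actually occur never equal $n$, or else that a numerator zero cancels the offending factor (as happens on the diagonal, where the coefficient of $A_{p,t}$ vanishes); this is a finite combinatorial check once the index ranges in (\ref{calc}) are tracked with care. Second, \emph{growth}: one needs $\sum_{j}|\Psi_{j}(n,2^{n})|<\infty$. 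We expect the true behaviour to be two-scale: after an initial transient of length $\asymp n$ --- during which the terms can be as large as $|\lambda_{n}|^{-1}$ itself, consistent with $\Psi_{0}(n,2^{n})=2^{n+1}-2$ already carrying the ``wrong'' exponential order $\asymp 2^{n}$ while $|\lambda_{n}|^{-1}\asymp\phi^{2n}$ grows strictly faster --- the tail in $j$ should decay geometrically at a rate governed by the subdominant eigenvalue $\lambda_{n+1}$. To obtain this one would induct on $j$ through (\ref{calc}), propagating a weighted $\ell^{1}$-bound on the $a^{(n)}_{p,t}$ with crude estimates for the binomials; the delicate point is to beat the factors $2^{i}$, $2^{i+j-1}$, $2^{t}$ against the division by $2^{p}\mathbf{Y}-2^{t}=2^{p+n}-2^{t}$.

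The hard part will be Step 1 together with the entanglement of Steps 3 and 4, rather than any isolated estimate. The recursion (\ref{calc}) is purely algebraic, and (\ref{main}) asserts that its specialisation along the transcendental curve $\mathbf{Y}=2^{\mathbf{X}}$ at integers reconstructs the analytically defined, and individually intractable, numbers $\lambda_{n}$. One must therefore pin down the exact canonical expansion of $\Phi_{n}$ that converts (\ref{vienas}) into the $(n,2^{n})$-specialisation of (\ref{calc}) --- in particular one must explain rigorously why $2^{n}$ enters as a \emph{second, independent} variable rather than a function of $n$ --- and one cannot neatly separate the scalar equation (ii) from the convergence, since (ii) is itself the value of a series. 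There is a real danger of circularity here: the cleanest proof of the geometric tail bound seems to require a priori input of the same strength as the ``Simplicity'' conjecture (a genuine gap $|\lambda_{n}|>|\lambda_{n+1}|$). A realistic route is thus to first establish (\ref{main}) as an identity of formal power series in a suitable completion, then to prove an a priori spectral-gap estimate strong enough to feed Step 4, and only then to upgrade to absolute convergence; the numerical evidence, and the correct leading exponential order of $\Psi_{0}(n,2^{n})$, are encouraging but no substitute for this last step.
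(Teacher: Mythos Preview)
Your sketch misses the paper's central device. The paper does not prove Conjecture~\ref{conj2} either --- Section~2 is an explicitly heuristic derivation --- but the mechanism it proposes is structurally different from yours and explains precisely the features you are guessing at. The paper introduces a one-parameter deformation $\omega$, $|\omega-2|\le 1$, of the operator $\mathcal{L}$ (so $\mathcal{L}$ corresponds to $\omega=1$), under which the eigenfunctions satisfy the deformed functional equation~(\ref{base}). The point is that at $\omega=2$ the Julia set $\mathscr{I}_{\omega}$ collapses to a single point and the $n$th eigenfunction becomes a \emph{rational function} with a pole of order $n+1$ at $z=2$; one computes directly $\lambda_{n}(2)=2^{n+1}-2$. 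The functions $\Psi_{j}$ are then nothing but the Taylor coefficients of $\lambda_{n}(\omega)$ at $\omega=2$, and the identity (\ref{main}) is the assertion that this Taylor series, evaluated at $\omega-2=-1$, converges to $\lambda_{n}(1)=(-1)^{n+1}/\lambda_{n}$. The recursion (\ref{calc}) drops out mechanically by substituting the ansatz $G_{n}(\omega,z)=\sum_{k}P_{n,k}(z)(z-2)^{-n-k-1}(\omega-2)^{k}$ into (\ref{base}) and comparing coefficients of $(\omega-2)^{p}(z-1)^{-(n+p+1-t)}$; the binomials arise from Lemma~1, which expands $(\omega z-2)^{-A-1}$ and $z^{-2}(\omega/z-2)^{-A-1}$ in this double basis. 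The appearance of $2^{n}$ as a second variable is thus not combinatorial (``words of length $n$'') but analytic: it is the value $\lambda_{n}(2)+2=2^{n+1}$ at the special point, and the gauge freedom $A_{P,P}=0$ reflects the freedom to multiply $G_{n}(\omega,z)$ by an arbitrary analytic $\Theta(\omega)$.

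Consequently, your Steps~1 and~3 are aimed at the wrong target: there is no direct dictionary between (\ref{vienas}) at $\omega=1$ and the recursion; the $a^{(n)}_{p,t}$ are Taylor data at $\omega=2$, not structure constants intrinsic to $\Phi_{n}$. Your Step~4 also mischaracterises the denominators: they are not products of $2^{a}\mathbf{Y}-2^{b}$ but of factors such as $3\mathbf{Y}-2$, $3\mathbf{Y}-4$, $9\mathbf{Y}-16$, $9\mathbf{Y}-2$ (see Table~1), coming from the effective coefficient $(2^{p}+(-1)^{p-t+1}2^{t})\mathbf{Y}-2^{t}(1-(-1)^{p-t})$ of $A_{p,t}$. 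The genuine analytic obstacle the paper leaves open is not a spectral-gap estimate for $\mathcal{L}$, but the validity of Assumption~\ref{priel} --- existence and joint analyticity of $G_{n}(\omega,z)$ and $\lambda_{n}(\omega)$ on the whole disc $|\omega-2|\le 1$ --- together with convergence of the Taylor series at the boundary point $\omega=1$.
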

\noindent Thus, we claim that the indexing of eigenvalues according to their absolute value is not accidental but rather a canonical one. This indexing corresponds to the degree of denominator of a rational function $G_{n}(2,z)$ (see the next section). The Table 1 in the end of this paper lists the first four rational functions $\Psi_{j}(\mathbf{X},\mathbf{Y})$. The Table 3 lists the values of $\Psi_{j}(2,4)$. According to the above conjecture, $(\sum_{j=0}^{\infty}(-1)^{j}\Psi_{j}(2,4))^{-1}=-\lambda_{2}=0.30366300289873265859_{+}$ (\emph{the Gauss-Kuzmin-Wirsing constant}). The Table 2 lists numerical values for
\begin{eqnarray*}
S_{N}(n)=\Big{(}\sum_{j=0}^{N}(-1)^{j}\Psi_{j}(n,2^n)\Big{)}^{-1}
\end{eqnarray*}
for $2\leq n\leq 5$ and some selected values of $N$. These calculations match the known digits of $\lambda_{n}$ very well (see \cite{briggs,flajolet1,macleod,zagier1} for numerical values). As in the case with the explicit series for the moments of the Minkowski question mark function \cite{alkauskas}, the series (\ref{main}) is not very useful in numerical calculations. For example, $\Psi_{30}(2,4)\approx1.5\cdot10^{-11}$, while the denominator of $\Psi_{30}(2,4)$ is approximately $2.8\cdot10^{967}$.\\
\indent Despite this drawback, we believe that the identity (\ref{main}) is significant: it shows that there exists a unifying approach towards the whole collection of eigenvalues $\{\lambda_{n},n\in\mathbb{N}\}$, apart from the approach provided
by the trace formulas for ${\rm Tr}(\mathcal{L}^{k})$. It is also important that each summand $\Psi_{j}(n,2^{n})$ is represented by a rational function with integer coefficients, hence it carries and encodes only a finite amount of information. Notwithstanding, in this paper we do not touch the most interesting question from the number-theoretic point of view; mainly, the structure of rational functions $\Psi_{j}(\mathbf{X},\mathbf{Y})$. This might be very hard, and it deserves a separate treatment. \\

The idea behind our technique which yields the Conjecture \ref{conj2} is as follows (see \cite{alkauskas} for more details). We replace \emph{the Calkin-Wilf tree} \cite{calkin} (whose permutation is \emph{the Stern-Brocot tree} and whose intersection with $[0,1]$ is \emph{the Farey tree}) by a certain tree which depends on a complex parameter $\o$, $|\o-2|\leq 1$. This tree is given by the root $1$, and each node $x$ generates two offsprings according to the rule
\begin{eqnarray}
\xymatrix @R=.5pc @C=.5pc {& & & & & & x & & & & & &\\
& & \frac{\o x}{x+1} \ar@{-}[urrrr] & & & & & & & & \frac{x+1}{\o}. \ar@{-}[ullll] & &}\label{itf}
\end{eqnarray}
When $\o=1$, we recover the Calkin-Wilf tree. For $|\o-2|\leq 1$, the closure of the direct limit of $x=1$ is a certian Julia set $\mathscr{I}_{\o}$ \cite{alkauskas}. The main idea is that this tree collapses to a single point for $\o=2$. Thus, instead of a functional analysis in case $\o=1$, we get a finite dimensional linear algebra for $\o=2$.  Despite the lack of practicality, this method have already allowed to better understand the structure of the moments of the Minkowski question mark function. This technique provides information on the eigenvalues $\lambda_{n}$ as well. Possibly, it can be also applied to the study of the eigenvalues of the Laplace-Beltrami operator $\Delta=-y^{2}(\frac{\partial^{2}}{\partial x^2}+\frac{\partial^{2}}{\partial y^2})$.\\

Now we will describe the ``dual" result. Let $s>1$ be a real number. The Mayer-Ruelle operator is defined by \cite{mayer4}
\begin{eqnarray}
\mathcal{L}_{s}[f(t)](x)=\sum\limits_{m=1}^{\infty}\frac{1}{(x+m)^s}f\Big{(}\frac{1}{x+m}\Big{)},\quad f\in\mathbf{V}.
\label{mr}
\end{eqnarray}
It is known \cite{hensley} that the dominant eigenvalue $\lambda_{1}(s)$ is positive and simple. (Note that various notations for $\lambda_{1}(s)$ are in use. For example, $\lambda_{1}(s)=\beta_{1}(s/2)$ in \cite{zagier1}, and $\lambda(s)=\lambda_{1}(s)$ in \cite{hensley}).
The function $s\mapsto\lambda_{1}(s)$ is analytic and strictly decreasing, log concave, it has a meromorphic continuation to the whole complex plane, and for $s\in\mathbb{R}$,
\begin{eqnarray*}
 \lim\limits_{s\rightarrow1_{+}}(s-1)\lambda_{1}(s)=1,\quad \lim\limits_{s\rightarrow\infty}\frac{1}{s}\log \lambda_{1}(s)=-\log \phi.
 \end{eqnarray*}
 This function can be given an alternative definition. For any $k-$tuple of positive integers $\mathbf{a}=(a_{1},a_{2},\ldots,a_{k})$, let $\langle\mathbf{a}\rangle$ denote the denominator of the continued fraction $[0,a_{1},a_{2},\ldots,a_{k}]$, and let $A(k)$ be the set of all such $k-$tuples. Then for $s>1$,
\begin{eqnarray*}
\lambda_{1}(s)=\lim\limits_{k\rightarrow\infty}\Big{(}\sum\limits_{\mathbf{a}\in A(k)}\langle\mathbf{a}\rangle^{-s}\Big{)}^{\frac{1}{k}}.
\end{eqnarray*}
This is the reason why $\lambda_{1}(s)$ is called \emph{the pseudo-zeta function associated with continued fractions}. More of its properties can be found in \cite{hensley}. We note that, thanks to the works of Mayer \cite{mayer3}, Lewis, and Lewis and Zagier \cite{zagier2}, there is a deep connection between spectrum of the Laplace-Beltrami operator, Lewis's three-term functional equation, spectrum of the Mayer-Ruelle operator $\mathcal{L}_{2s}$, and the zeros of the Selberg zeta function. Thus, according to the calculations in \cite{zagier1},
\begin{eqnarray*}
\lambda_{1}(1+2it_{0})&=&-1,\quad t_{0}=9.533695_{+},\text{ corresponding to the first odd spectral parameter};\\
\lambda_{1}(1+2it_{1})&=&1,\quad t_{1}=13.77975_{+},\text{ corresponding to the first even spectral parameter}.
\end{eqnarray*}
Finally, since ${\rm det}(1-\mathcal{L}_{2s})$ also vanishes for
$2s$ being a non-trivial zero of the Riemann zeta function, it so
happens that $1-\lambda_{1}(s)$ shares some zeros with the Riemann
zeta function as well. For example, this is the case for the first
and the fourth non-trivial zero, ordered by their (positive)
imaginary
part. Further calculations are currently being performed by the author.\\

For integers $p,t,j\geq 0$, let us define rational functions with rational coefficients $B_{p,t}(\mathbf{X},\mathbf{Y})$ and $\Lambda_{j}(\mathbf{X},\mathbf{Y})$ by
\begin{eqnarray*}
\Lambda_{0}(\mathbf{X},\mathbf{Y})=2\mathbf{Y}-2,\quad B_{0,0}(\mathbf{X},\mathbf{Y})=1,
\end{eqnarray*}
and then exactly the same way as we defined $A$ and $\Psi$ using procedure and the recurrence (\ref{calc}), subject to the following convention: replace throughout $A$ with $B$, $\Psi$ with $\Lambda$, and replace the underlined binomial coefficient with a simpler one, given by
\begin{eqnarray*}
\binom{k-i}{t-i-j}.
\end{eqnarray*}
Then we have
\begin{conj}Let $s\in\mathbb{C}$ and $\Re s>1$. Then
\begin{eqnarray}
\fbox{$\displaystyle\frac{1}{\lambda_{1}(s)}=\sum\limits_{j=0}^{\infty}(-1)^{j}\Lambda_{j}(s-1,2^{s-1})$}
\label{main2}
\end{eqnarray}
and the series converges absolutely.
\label{conj3}
\end{conj}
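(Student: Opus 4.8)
\medskip
\noindent\textbf{A plan towards Conjecture \ref{conj3}.}
The plan is to carry over to the Mayer--Ruelle operator the $\omega$-deformation machinery of \cite{alkauskas} sketched above for Conjecture \ref{conj2}, and then to verify that the resulting combinatorics is exactly the recursion obtained from (\ref{calc}) after the substitutions $A\mapsto B$, $\Psi\mapsto\Lambda$ and the replacement of the underlined binomial by $\binom{k-i}{t-i-j}$.

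First I would introduce, for $\Re s>1$ and for $\omega$ in the closed disc $|\omega-2|\le1$, the deformed transfer operator attached to the tree (\ref{itf}) with branch maps $L_\omega(x)=\frac{\omega x}{x+1}$, $R_\omega(x)=\frac{x+1}{\omega}$ and to the weight $(\,\cdot\,)^{-s}$ --- equivalently, the two-term functional equation for the analogue, in weight $s$, of the integrated eigenfunction $\Phi_n$ of (\ref{vienas}) (a primitive of order $s-1$ of the Mayer--Ruelle eigenfunction). This should produce an analytic family $\mathcal{L}_{s,\omega}$ whose leading eigenvalue $\lambda_1(s,\omega)$ is positive and simple for $\omega$ real near $1$, equals $\lambda_1(s)$ at $\omega=1$, and whose reciprocal $\Lambda(s,\omega):=1/\lambda_1(s,\omega)$ is the object to be expanded. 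The segment $\omega\in[1,2]$ sits on the boundary circle $|\omega-2|=1$, with the two Mayer--Ruelle extremes at its endpoints: at $\omega=1$ one recovers $\mathcal{L}_s$, while at $\omega=2$ the maps $L_2,R_2$ both fix $x=1$ with common multiplier $\tfrac12$, the Julia set $\mathscr{I}_\omega$ degenerates to the single point $\{1\}$, and the eigenvalue problem becomes finite-dimensional linear algebra; its solution should give $\Lambda(s,2)=2^s-2=\Lambda_0(s-1,2^{s-1})$, the zeroth Taylor coefficient about $\omega=2$.

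The core step is then to prove that $\Lambda(s,\cdot)$ is holomorphic on $|\omega-2|<1$ and to expand $\Lambda(s,\omega)=\sum_{j\ge0}c_j(s)(\omega-2)^j$ together with a parallel Taylor expansion, at the collapse point $x=1$, of the normalized eigenfunction; the $j$-th order part of the functional equation couples $c_j(s)$ to lower-order data and to ``structure constants'' $B_{p,t}$ recording the mixed Taylor data of the eigenfunction, and one would match this coupled system term by term with the three sums on the right of (\ref{calc}). Here $\mathbf{X}=s-1$ (the order of the primitive) and $\mathbf{Y}=2^{s-1}$ (the common branch multiplier $\tfrac12$ twisted by the weight) are the forced substitutions; treating $\mathbf{X}$ and $\mathbf{Y}=2^{\mathbf{X}}$ as independent is what renders the coefficients rational. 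The simpler binomial $\binom{k-i}{t-i-j}$ is precisely what the single-eigenfunction two-term equation produces, whereas the extra shift $\mathbf{X}+k-i-1$ in the Gauss--Kuzmin--Wirsing case is forced by the prescribed decay/oscillation that singles out the $n$-th eigenfunction $\Phi_n$; as a built-in check, at $\mathbf{X}=1$ the two binomials agree and $s=2$, consistently with $\lambda_1(2)=\lambda_1=1$ being at once the $s=2$ instance of (\ref{main2}) and the $n=1$ instance of (\ref{main}). One must also verify, exactly as promised for the $A_{p,t}$ in the Procedure, that at $(p,t)=(P,P)$ the vanishing of the coefficient of $B_{p,t}$ together with the coefficient $2^{P-1}$ of $\Lambda_P$ determines $\Lambda_P$ uniquely while leaving $B_{P,P}$ free, so that the choice $B_{P,P}=0$ is legitimate and irrelevant to $\sum_j(-1)^j\Lambda_j$.

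The main obstacle is the convergence claim: that $\sum_j\Lambda_j(s-1,2^{s-1})(\omega-2)^j$ has radius of convergence at least $1$ and in fact converges absolutely on the closed disc, in particular at $\omega=1$, where the identity $\Lambda(s,1)=1/\lambda_1(s)$ is then read off. This is delicate because the perturbation $\mathcal{L}_{s,\omega}-\mathcal{L}_{s,2}$ is \emph{not} uniformly small as $\omega\to1$: the set $\mathscr{I}_\omega$ is a genuine fractal for $\omega$ near the boundary circle and at $\omega=1$ spreads out to the unbounded set $\mathscr{I}_1=[0,\infty]$, so the usual analytic-perturbation bounds for $\lambda_1(s,\omega)$ do not reach the boundary unaided. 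One would need either uniform trace-class estimates for $\mathcal{L}_{s,\omega}$ valid up to $|\omega-2|=1$, together with a quantitative spectral-gap statement keeping $\lambda_1(s,\omega)$ simple, bounded away from $0$, and isolated from the rest of the spectrum on the open disc (with controlled behaviour as $|\omega-2|\to1$), or else a direct exponential-in-$j$ majorant for the structure constants $B_{p,t}(\mathbf{X},\mathbf{Y})$ at $(\mathbf{X},\mathbf{Y})=(s-1,2^{s-1})$. Securing any of these --- that is, controlling the analytic continuation of $\Lambda(s,\cdot)$ across the transition from a Cantor-like Julia set to a full interval --- is where I expect the genuine difficulty to lie.
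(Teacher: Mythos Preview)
Your plan is essentially the paper's own heuristic derivation: deform by $\omega$, observe that at $\omega=2$ the Julia set collapses and the eigenvalue problem becomes elementary, expand the eigenfunction and $\lambda_1(\omega,s)$ in powers of $\omega-2$, and match the resulting recursion with the modified version of (\ref{calc}). The paper does \emph{not} prove Conjecture~\ref{conj3}; it offers exactly this sketch and then appeals to numerical evidence. So your proposal and the paper are on the same footing: both are programmes, and you have correctly located the genuine gap in the convergence/boundary step.

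A few points where your presentation drifts from the paper's actual mechanism. First, the expansion is not ``at the collapse point $x=1$'' but at $z=2$: the eigenfunction is singular on $\mathscr{I}_\omega+1$, which for $\omega=2$ is $\{2\}$, and the paper's ansatz is $\widehat{G}_s(\omega,z)=\sum_{k\ge0}\widehat{P}_{s,k}(z)(z-2)^{-s-k}(\omega-2)^k$ with $\deg\widehat{P}_{s,k}\le k$. Second, your heuristic for why the simpler binomial $\binom{k-i}{t-i-j}$ appears (``single-eigenfunction two-term equation'') is not quite the mechanism: in the paper's derivation the underlined binomial in (\ref{calc}) arises from Lemma~1, formula (\ref{form2}), as $\binom{A-1}{\ell}$ where $A$ is the pole order of the relevant term. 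In the GKW case the ansatz has $\deg P_{n,k}\le n+k-1$, giving $A-1=n+k-i-1$; in the Mayer--Ruelle case the ansatz has $\deg\widehat{P}_{s,k}\le k$ (independent of $s$), and the interplay of the pole order $s+k-i$ with the weight $(-z)^{-s}$ in (\ref{vien}) produces $\binom{k-i}{t-i-j}$. Your consistency check at $\mathbf{X}=1$ is correct and pleasant, but does not by itself explain the replacement. Third, be careful with what is the eigenvalue and what is its reciprocal: in the paper's notation the quantity expanded in $(\omega-2)$ is $\lambda_1(\omega,s)$ itself, with $\lambda_1(2,s)=2^s-2$, and the identification with $1/\lambda_1(s)$ happens at $\omega=1$ via the analogue of (\ref{tikr}); your line defining $\Lambda(s,\omega):=1/\lambda_1(s,\omega)$ and then asserting $\Lambda(s,2)=2^s-2$ conflates the two.

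None of this affects the overall verdict: the route is the paper's route, and the hard part---absolute convergence on the closed disc $|\omega-2|\le1$, in particular at $\omega=1$---remains open in both.
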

It is rather remarkable that all the eigenvalues of $\mathcal{L}$ can be calculated using almost the same recurrence as the one which is used to calculate the dominant eigenvalue of $\mathcal{L}_{s}$!\\

We finish this section with the following curious corollary. It is known that $\lambda'_{1}(2)=-C$, where $C=\frac{\pi^2}{12\log 2}$ is the so called \emph{Khinchin-L\'{e}vy constant} \cite{flajolet3,hensley}. Based on the above series, we can calculate this constant using the truncation of the right hand side of (\ref{main2}). Thus, we have:
\begin{eqnarray*}
\frac{\d}{\d s}\sum\limits_{j=0}^{3}(-1)^{j}\Lambda_{j}(s-1,2^{s-1})|_{s=2}=\frac{167}{48}\cdot\log 2-\frac{59}{48}=\frac{\pi^2}{12\log 2}-0.00416121139_{+}.
\end{eqnarray*}
Further, the summations over $0\leq j\leq 5$ and $0\leq j\leq 8$ produce respectively the following approximations:
\begin{eqnarray*}
\frac{44501}{11520}\cdot\log 2-\frac{1909}{1280}&=&\frac{\pi^2}{12\log 2}-0.00039353038_{+};\\
\frac{66655217}{15482880}\cdot\log 2-\frac{59637137}{33177600}&=&\frac{\pi^2}{12\log 2}-0.00001907581_{+}.\\
\end{eqnarray*}
Thus, each time we get a linear form $a_{N}\log^{2}2+b_{N}\log 2$, $a_{N},b_{N}\in\mathbb{Q}$, which approximates the constant $\pi^{2}$. Though, of course, the approximations are far from being best, it is still a rather curious corollary!\\
\indent The series (\ref{main2}) can be also applied to calculate \emph{the Hensley constant}, given by \cite{finch, hensley}
\begin{eqnarray*}
\frac{\lambda'_{1}(2)^{2}-\lambda''_{1}(2)}{\pi^{6}\lambda'_{1}(2)^{3}}.
\end{eqnarray*}
\section{Heuristics behind the procedure}
In this section we present more or less formal derivation of the recurrence (\ref{calc}). As the Table 2 suggests, all subsequent statements must have rigorous proofs. The main ideas behind our method are the same as in \cite{alkauskas}. \\

Let $\o\in\mathbb{C}$, $|\o-2|\leq 1$, and let us consider the following operator
\begin{eqnarray*}
\mathcal{D}_{\o}[f(t)](z)=\sum\limits_{m=1}^{\infty}\frac{\o^{m-1}}{\Big{(}\o^{m-1}z-\frac{\o^{m}-1}{\o-1}\Big{)}^2}\cdot
f\Big{(}\frac{\o}{\o^{m-1}z-\frac{\o^{m}-1}{\o-1}}\Big{)}.
\end{eqnarray*}
 Up to the sign, this operator is the generalization of (\ref{gkw}). Indeed,
\begin{eqnarray*}
\mathcal{D}_{1}[f(-t)](-z)=\mathcal{L}[f(t)](z).
\end{eqnarray*}
In these formal derivations we do not specify where this operator is defined. If $\o$ is real, then
\begin{eqnarray*}
[-\o,0]=\bigcup\limits_{m=1}^{\infty}\Big{[}-\frac{\o(\o-1)}{\o^{m}-1},-\frac{\o(\o-1)}{\o^{m+1}-1}\Big{)}\cup\{0\}.
\end{eqnarray*}
Thus, in this case $\mathcal{D}_{\o}$ can be defined as an operator in the space ${\sf C}[-\o,0]$. For complex $\o$, the interval $[-\o,0]$ is replaced by a closure of a direct limit of $z=0$ under the family of transformations $T_{m}(\o,z)=\frac{\o}{\o^{m-1}z-\frac{\o^{m}-1}{\o-1}}$, $m\in\mathbb{N}$. This is a Julia set $\mathscr{H}_{\o}$ similar to the one introduced in \cite{alkauskas}, and we know that its structure can be rather complicated and it deserves a separate treatment. Thus, $\mathcal{D}_{\o}$ acts on the space ${\sf C}(\mathscr{H}_{\o})$. Of course, we could have chosen the definition of $\mathcal{D}_{\o}$ in such a way that $\mathcal{D}_{1}=\mathcal{L}$, since this is only a matter of orientation of the plane. Nevertheless, to avoid confusion, it is better to adhere to the same orientation as in \cite{alkauskas}.
If $G(\o,z)$ is the eigenfunction of this operator with the eigenvalue $\frac{1}{\lambda(\o)}$, then it satisfies the functional equation
\begin{eqnarray*}
G(\o,z+1)=\o G(\o,\o z)+\frac{\lambda(\o)}{z^{2}}\cdot G\Big{(}\o,\frac{\o}{z}\Big{)}.
\end{eqnarray*}
Our result in Section 1 is based on the following

\begin{assum}For each $n\in\mathbb{N}$, there exist a two variable analytic function $G_{n}(\o,z)$, which is defined for $(\o,z)\in\mathbb{C}^{2}$, $|\o-2|\leq 1$, $z\in\mathbb{C}\setminus\{\mathscr{I}_{\o}+1\}$, and the unique analytic function $\lambda_{n}(\o)$, $|\o-2|\leq 1$, such that $\lim\limits_{z} G(\o,z)=0$ if ${\rm dist}(z,\mathscr{I}_{\o})\rightarrow\infty$, $G_{n}(2,z)$ is a meromorphic function with a single pole of order $n+1$ at $z=2$, and $G(\o,z)$ satisfies the functional equation
\begin{eqnarray}
G(\o,z+1)=\o G(\o,\o z)+\frac{(-1)^{n+1}\lambda_{n}(\o)}{z^{2}}\cdot G\Big{(}\o,\frac{\o}{z}\Big{)},\quad |\o-2|\leq 1, z\notin\mathscr{I}_{\o}.
\label{base}
\end{eqnarray}
\label{priel}
\end{assum}
\noindent(Beware that for $n=1$ the function $\lambda_{1}(\o)$ is not the same as the function $\lambda_{1}(s)$, as is given in the second half of the Section 1. Here we deal only with $\mathcal{L}$, and this should not cause any confusion). The set $\mathscr{I}_{\o}$ was introduced in \cite{alkauskas} and it is a Julia set. More precisely, it is a closure of a direct limit of iterated function system, given by the two transformations (\ref{itf}). The set $\mathscr{I}_{\o}$ is homeomorphic to a unit interval for $\o\neq 1$. If $1\leq \o\leq 2$ is real, then $\mathscr{I}_{\o}=[\o-1,\frac{1}{\o-1}]$. For complex $\o$, the structure of $\mathscr{I}_{\o}$ is rather complicated. We refer to \cite{alkauskas} for further details.
When $\o=1$, the equation (\ref{base}) specializes to
\begin{eqnarray*}
G(1,z+1)=G(1,z)+\frac{(-1)^{n+1}\lambda_{n}(1)}{z^2}\cdot G\Big{(}1,\frac{1}{z}\Big{)}.
\end{eqnarray*}
Comparing this to (\ref{vienas}), we claim that
\begin{eqnarray}
G_{n}(1,-z)=c_{n}\Phi'_{n}(z),\quad c_{n}\neq 0,\quad (-1)^{n+1}\lambda_{n}(1)=\frac{1}{\lambda_{n}}.\label{tikr}
\end{eqnarray}

As is stated in the Assumption \ref{priel}, for $\o=2$ the equation (\ref{base}) has a solution of the form
\begin{eqnarray*}
G_{n}(2,z)=G_{n}(z)=\frac{P_{n}(z)}{(z-2)^{n+1}},\text{ where }P_{n}(z)\text{ is entire},\quad P_{n}(2)\neq 0.
\end{eqnarray*}
In terms of $P_{n}(z)$, the equation (\ref{base}) then reads as
\begin{eqnarray}
P_{n}(z+1)=\frac{1}{2^{n}}\cdot P_{n}(2z)+\frac{\lambda_{n}(2)}{2^{n+1}}\cdot P_{n}\Big{(}\frac{2}{z}\Big{)}z^{n-1}.
\label{polin}
\end{eqnarray}
A substitution $z=1$ yields
\begin{eqnarray*}
\lambda_{n}(2)=2^{n+1}-2.
\end{eqnarray*}
Without a loss of generality, let us assume $P_{n}(2)=1$. If we differentiate (\ref{polin}) and substitute $z=1$, we immediately obtain
\begin{eqnarray*}
P'_{n}(2)=\frac{\d}{\d z}P_{n}(z)|_{z=2}=\frac{2^{n}-1}{3\cdot2^{n}-4}\cdot(n-1).
\end{eqnarray*}
Further differentiations of (\ref{polin}) and subsequent substitutions $z=1$ output the values
\begin{eqnarray*}
\frac{\d^2}{\d z^{2}}P_{n}(2)&=&\frac{2^{n}-1}{3(3\cdot2^{n}-4)}\cdot(n-1)(n-2);\\
\frac{\d^3}{\d z^{3}}P_{n}(2)&=&\frac{(2^n-2)(2^n-1)}{(3\cdot 2^n-4)(9\cdot 2^n-16)}\cdot(n-1)(n-2)(n-3).
\end{eqnarray*}
In general, a direct induction shows that
\begin{eqnarray*}
\frac{\d^{\ell}}{\d z^{\ell}}P_{n}(2)=\frac{(n-1)!}{(n-\ell-1)!}\cdot c^{(\ell)}_{n},
\end{eqnarray*}
where $c^{(\ell)}_{n}$ are given recurrently
\begin{eqnarray*}
c^{(0)}_{n}=1,\quad c^{(\ell)}_{n}=\frac{2^n-1}{2^n\cdot A_{\ell}}\cdot\sum\limits_{i=0}^{\ell-1}c^{(i)}_{n}(-1)^{i}2^{i}\binom{\ell}{i};
\end{eqnarray*}
here $A_{\ell}=1-2^{\ell}$ if $\ell$ is even, and $A_{\ell}=2^{\ell}+1-2^{\ell+1-n}$ is $\ell$ is odd. Thus, we see that not only $P_{n}(z)$ is an entire function but it is a polynomial of degree $\leq n-1$. Moreover, this also shows that the eigenvalue $\lambda_{n}(2)$ is simple.\\

As the second step, let us differentiate (\ref{base}) with respect to $\o$ and substitute $\o=2$. For simplicity, put
\begin{eqnarray*}
\frac{\d}{\d \o}G_{n}(\o,z)|_{\o=2}=H_{n}(z),\quad \lambda'_{n}(2)=\sigma_{n}(2).
\end{eqnarray*}
We get:
\begin{eqnarray}
& &H_{n}(z+1)-2H_{n}(2z)-\frac{(-1)^{n+1}\lambda_{n}(2)}{z^{2}}\cdot H_{n}\Big{(}\frac{2}{z}\Big{)}\nonumber\\
&=&G_{n}(2z)+2z\cdot G'_{n}(2z)+\frac{(-1)^{n+1}\sigma_{n}(2)}{z^{2}}\cdot G_{n}\Big{(}\frac{2}{z}\Big{)}+\frac{(-1)^{n+1}\lambda_{n}(2)}{z^{3}}\cdot G'_{n}\Big{(}\frac{2}{z}\Big{)}.
\label{isv}
\end{eqnarray}
Similarly as in \cite{alkauskas}, we see that
\begin{eqnarray*}
H_{n}(z)=\frac{Q_{n}(z)}{(z-2)^{n+2}},\quad Q_{n}(z)\text{ is polynomial},{\rm deg}(Q_{n})\leq n.
\end{eqnarray*}
Writing the equation (\ref{isv}) in terms of $Q_{n}(z)$, and substituting $z=1$, we obtain
\begin{eqnarray*}
Q_{n}(2)=\frac{2^{n}-2}{3\cdot 2^{n}-2}\cdot(n+1).
\end{eqnarray*}
Differentiation of that equation and substitution $z=1$ (incidentally, the coefficient at $Q'_{n}(2)$ vanishes) gives the unique value for $\sigma_{n}(2)$. We clearly see that, unlike $Q_{n}(2)$, the value of $Q'_{n}(2)$ depends on our choice. This has the following explanation. If $G_{n}(\o,z)$ is a solution to (\ref{base}), the so is $G_{n}(\o,z)\cdot\Theta(\o)$ for any analytic function $\Theta(\o)$, $|\o-2|\leq 1$. In other words: the solution to (\ref{isv}) is not unique; all solutions are given by $H_{n}(z)=\overline{H}_{n}(z)+c\cdot G_{n}(z)$, $c\in\mathbb{C}$, where $\overline{H}_{n}(z)$ is any particular solution. Therefore, $Q_{n}(z)=\overline{Q}_{n}(z)+c\cdot(z-2)P_{n}(z)$, and $Q'_{n}(2)$ is not uniquely defined. Since $P_{n}(2)\neq 0$, we can define it arbitrarily. All the consequent values of $\frac{\d^\ell}{\d z^{\ell}}Q_{n}(2)$, $\ell\geq 2$, are then determined. At this stage, the way our method works should be clear: we differentiate the equation (\ref{base}) $p$ times with respect to $\o$, and substitute $\o=2$. Let $\mathscr{E}_{p}$ be the equation we obtain,  and let
\begin{eqnarray*}
\frac{1}{p!}\frac{\d^{p}}{\d \o^{p}}G_{n}(\o,z)|_{\o=2}=G^{(p)}_{n}(z).
\end{eqnarray*}
The careful inspection of the equation $\mathscr{E}_{p}$ shows that it has the form similar to (\ref{isv}), where on the l.h.s. we have $G^{(p)}_{n}(z)$ instead of $H_{n}(z)$, and the r.h.s. is a $\mathbb{Q}(z)-$linear combination of $G^{(k)}_{n}(z)$ (with arguments $z+1$, $2z$ and $\frac{2}{z}$) for $0\leq k\leq p-1$, and it includes the coefficients $\frac{\d^{k}}{\d\o^{k}}\lambda_{n}(2)$, $0\leq k\leq p$. This shows that
\begin{eqnarray*}
G^{(p)}_{n}(z)=\frac{P_{n,p}(z)}{(z-2)^{n+p+1}},{\rm deg}(P_{n,p})\leq n+p-1.
\end{eqnarray*}
The equation $\mathscr{E}_{p}$, written in the terms of $P_{n,p}(z)$, allows to determine $P_{n,p}(2)$. We inductively differentiate the equation $\mathscr{E}_{p}$ exactly $t$ times, $0\leq t<p$, substitute $z=1$ to recover the value $\frac{\d^{t}}{\d z^{t}}P_{n,p}(2)$. When $t=p$, the coefficient at $\frac{\d^{p}}{\d z^{p}}P_{n,p}(2)$ vanishes, but then $\frac{\d^{p}}{\d z^{p}}\lambda_{n}(2)$ appears on the stage, which thus can be calculated. We set $\frac{\d^{p}}{\d z^{p}}P_{n,p}(2)=0$, and further proceed with determination of $\frac{\d^{t}}{\d z^{t}}P_{n,p}(2)$ for $t>p$. Having done so for $0\leq t\leq N$, let us move to the equation $\mathscr{E}_{p+1}$, and so on. To write down the equations $\mathscr{E}_{p}$ is a tedious job, and we rather proceed via the formal power series expansion.\\

As we have just seen, the $n$th solution of the functional equation (\ref{base}) can be given by
\begin{eqnarray*}
G_{n}(\o,z)&=&\sum\limits_{k=0}^{\infty}\frac{P_{n,k}(z)}{(z-2)^{n+k+1}}(\o-2)^{k}=
\sum\limits_{k=0}^{\infty}\sum\limits_{i=0}^{n+k-1}\frac{a_{k,i}(n)}{(z-2)^{n+k+1-i}}(\o-2)^{k};\\
\text{ where }P_{n,k}(z)&=&\sum\limits_{i=0}^{n+k-1}a_{k,i}(n)(z-2)^{i}.
\end{eqnarray*}
Let
\begin{eqnarray}
\lambda_{n}(\o)=\sum\limits_{j=0}^{\infty}\psi_{j}(n)(\o-2)^{j}.
\label{tikr2}
\end{eqnarray}
As we already know, $\psi_{0}(n)=2^{n+1}-2$. We will formally substitute these powers series into (\ref{base}) and compare the corresponding coefficients at $\frac{(\o-2)^{p}}{(z-1)^{n+p+1-t}}$. First, we need two formulas.
\begin{lem}Let $A\in\mathbb{N}_{0}$. Then we have the formal power series
\begin{eqnarray}
\frac{1}{(\o z-2)^{A+1}}&=&\sum\limits_{s=0}^{\infty}\sum\limits_{\ell=0}^{s}\binom{A+s}{s}\binom{s}{\ell}\frac{(-1)^{s}}{2^{A+s+1}}
\frac{(\o-2)^{s}}{(z-1)^{A+\ell+1}};\label{form1}\\
\frac{1}{z^{2}(\frac{\o}{z}-2)^{A+1}}&=&\sum\limits_{s=0}^{\infty}\sum\limits_{\ell=0}^{A-1}\binom{A+s}{s}\binom{A-1}{\ell}\frac{(-1)^{A+1}}{2^{A+s+1}}
\frac{(\o-2)^{s}}{(z-1)^{A+s+1-\ell}}\label{form2}.
\end{eqnarray}
\end{lem}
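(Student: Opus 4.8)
The plan is to prove both identities by one short computation carried out in the local coordinates $u=\o-2$, $v=z-1$: isolate the leading factor, expand with the generalized binomial series, and then perform a second binomial expansion to split off the powers of $(z-1)$. Throughout, the statements are read as identities of formal power series in $(\o-2)$ whose coefficients are finite linear combinations of negative powers of $(z-1)$; equivalently they are the Taylor expansions of the two rational functions about $\o=2$, convergent for $z$ near (but not equal to) $1$ and $|\o-2|$ small. Each step is legitimate there because, at a fixed order in $(\o-2)$, only finitely many summands contribute.

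For (\ref{form1}) I would write the base of the power as $\o z-2=\o(z-1)+(\o-2)=(u+2)v+u=2v+u(v+1)$, so that, applying the generalized binomial theorem $(1+x)^{-(A+1)}=\sum_{s\geq0}\binom{A+s}{s}(-x)^{s}$ with $x=\frac{u(v+1)}{2v}$,
\begin{eqnarray*}
\frac{1}{(\o z-2)^{A+1}}&=&\frac{1}{(2v)^{A+1}}\Big(1+\frac{u(v+1)}{2v}\Big)^{-(A+1)}\\
&=&\sum\limits_{s=0}^{\infty}\binom{A+s}{s}\frac{(-1)^{s}}{2^{A+s+1}}\,u^{s}\,\frac{(v+1)^{s}}{v^{A+s+1}}.
\end{eqnarray*}
Expanding $(v+1)^{s}=\sum_{j=0}^{s}\binom{s}{j}v^{j}$ and reindexing by $\ell=s-j$ turns $\frac{(v+1)^{s}}{v^{A+s+1}}$ into $\sum_{\ell=0}^{s}\binom{s}{\ell}v^{-(A+\ell+1)}$; restoring $u=\o-2$, $v=z-1$ yields the right-hand side of (\ref{form1}).

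For (\ref{form2}) I would first simplify $z^{2}\big(\tfrac{\o}{z}-2\big)^{A+1}=z^{2}z^{-(A+1)}(\o-2z)^{A+1}=z^{1-A}(\o-2z)^{A+1}$, and use $\o-2z=(\o-2)-2(z-1)=u-2v=-(2v-u)$ together with $z^{A-1}=(v+1)^{A-1}$, to get
\begin{eqnarray*}
\frac{1}{z^{2}(\frac{\o}{z}-2)^{A+1}}=\frac{z^{A-1}}{(\o-2z)^{A+1}}=(-1)^{A+1}\,\frac{(v+1)^{A-1}}{(2v-u)^{A+1}}.
\end{eqnarray*}
Factoring $(2v-u)^{A+1}=(2v)^{A+1}\big(1-\tfrac{u}{2v}\big)^{A+1}$ and using $(1-x)^{-(A+1)}=\sum_{s\geq0}\binom{A+s}{s}x^{s}$ gives $\sum_{s\geq0}\binom{A+s}{s}\frac{(-1)^{A+1}}{2^{A+s+1}}u^{s}\frac{(v+1)^{A-1}}{v^{A+s+1}}$; expanding $(v+1)^{A-1}=\sum_{\ell=0}^{A-1}\binom{A-1}{\ell}v^{\ell}$ and returning to the variables $\o,z$ produces (\ref{form2}).

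The computation is routine, so there is no substantial obstacle; two bookkeeping points are the only things that need care. The reindexing $\ell=s-j$ in the first identity, and the relation $\o-2z=-(2v-u)$ that is responsible for the global sign $(-1)^{A+1}$ in the second, must be tracked attentively. In addition, $z^{A-1}=(v+1)^{A-1}=\sum_{\ell=0}^{A-1}\binom{A-1}{\ell}v^{\ell}$ is a polynomial identity only for $A\geq1$, so (\ref{form2}) as written holds for $A\in\mathbb{N}$; this is harmless downstream, since every later use of (\ref{form2}) occurs with exponent $A=n+k-i$ subject to $0\leq i\leq n+k-1$, hence with $A\geq1$.
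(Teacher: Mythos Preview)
Your proof is correct and follows essentially the same route as the paper's: both compute the Taylor expansion in $\o-2$ (you via the generalized binomial series after factoring out $(2v)^{A+1}$, the paper by differentiating $s$ times and setting $\o=2$ to read off $T_{s}(z)=\binom{A+s}{s}\frac{(-1)^{s}z^{s}}{2^{A+s+1}(z-1)^{A+s+1}}$), and then both expand the remaining polynomial factor $z^{s}=(v+1)^{s}$ (resp.\ $z^{A-1}=(v+1)^{A-1}$) binomially. Your added remark that (\ref{form2}) as stated requires $A\geq 1$, and that every later application satisfies this, is a useful clarification the paper omits.
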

\begin{proof}Let us write
\begin{eqnarray*}
\frac{1}{(\o z-2)^{A+1}}=\sum\limits_{s=0}^{\infty}(\o-2)^{s}T_{s}(z).
\end{eqnarray*}
Differentiation $s$ times with respect to $\o$ and substitution $\o=2$ implies
\begin{eqnarray*}
T_{s}(z)=\frac{(-1)^{s}z^{s}}{2^{A+s+1}(z-1)^{A+s+1}}\binom{A+s}{s}.
\end{eqnarray*}
Now the formula (\ref{form1}) follows from the identity $z^s=(z-1+1)^{s}=\sum_{\ell=0}^{s}\binom{s}{l}(z-1)^{s-\ell}$. The same trick proves the formula (\ref{form2}).\end{proof}
Therefore, formally, we have:
\begin{eqnarray*}
G_{n}(\o,z+1)=\sum\limits_{k=0}^{\infty}\sum\limits_{i=0}^{n+k-1}a_{k,i}(n)\frac{(\o-2)^{k}}{{(z-1)^{n+k+1-i}}}.
\end{eqnarray*}
Further, using (\ref{form1}), we get
\begin{eqnarray*}
& &\o G_{n}(\o,\o z)=\sum\limits_{k=0}^{\infty}\sum\limits_{i=0}^{n+k-1}\frac{a_{k,i}(n)}{(\o z-2)^{n+k+1-i}}\o(\o-2)^{k}\\
&=&\sum\limits_{k=0}^{\infty}\sum\limits_{i=0}^{n+k-1}\sum\limits_{s=0}^{\infty}\sum\limits_{\ell=0}^{s}a_{k,i}(n)
\binom{n+k+s-i}{s}\binom{s}{\ell}\frac{(-1)^{s}}{2^{n+k+1+s-i}}
\frac{\o(\o-2)^{s+k}}{(z-1)^{n+k+1+\ell-i}}.\\
\end{eqnarray*}
To get the expansion in terms of powers of $\o-2$, we will soon use the identity $\o(\o-2)^{s+k}=(\o-2)^{s+k+1}+2(\o-2)^{s+k}$.
Finally, using (\ref{form2}), we obtain
\begin{eqnarray*}
&(-1)^{n+1}&\frac{\lambda_{n}(\o)}{z^{2}}\cdot G_{n}\Big{(}\o,\frac{\o}{z}\Big{)}=
(-1)^{n+1}\sum\limits_{j=0}^{\infty}\sum\limits_{k=0}^{\infty}\sum\limits_{i=0}^{n+k-1}
\frac{\psi_{j}(n)\cdot a_{k,i}(n)}{z^{2}(\frac{\o}{z}-2)^{n+k+1-i}}(\o-2)^{j+k}\\
&=&\sum\limits_{j=0}^{\infty}\sum\limits_{k=0}^{\infty}\sum\limits_{i=0}^{n+k-1}\sum\limits_{s=0}^{\infty}\sum\limits_{\ell=0}^{n+k-i-1}
\psi_{j}(n)\cdot a_{k,i}(n)\binom{n+k+s-i}{s}\binom{n+k-i-1}{\ell}\\
&\times&\frac{(-1)^{k-i}}{2^{n+k+1+s-i}}
\frac{(\o-2)^{s+j+k}}{(z-1)^{n+k+s+1-i-\ell}}.
\end{eqnarray*}
Since $G_{n}(\o,z)$ satisfies (\ref{base}), let us compare the coefficient at the term $\frac{(\o-2)^{p}}{(z-1)^{n+p+1-t}}$ of the power series expansions of both sides. We get
\begin{eqnarray*}
a_{p,t}(n)&=&\sum\limits_{j=0}^{p}\sum\limits_{k=0}^{p-j}\sum\limits_{i=0}^{t-j}
\psi_{j}(n)\cdot a_{k,i}(n)\binom{n+p-i-j}{p-j-k}\binom{n+k-i-1}{t-i-j}\frac{(-1)^{k-i}}{2^{n+p+1-i-j}}\\
&+&\sum\limits_{k=0}^{p}\sum\limits_{i=0}^{t}a_{k,i}(n)
\binom{n+p-i}{p-k}\binom{p-k}{p+i-k-t}\frac{(-1)^{p-k}}{2^{n+p-i}}\\
&+&\sum\limits_{k=0}^{p-1}\sum\limits_{i=0}^{t-1}a_{k,i}(n)
\binom{n+p-i-1}{p-k-1}\binom{p-k-1}{p+i-k-t}\frac{(-1)^{p-k-1}}{2^{n+p-i}}.
\end{eqnarray*}
This shows that
\begin{eqnarray*}
a_{p,t}(n)=A_{p,t}(n,2^{n}),\quad \psi_{j}(n)=\Psi_{j}(n,2^{n}),
\end{eqnarray*}
where $A_{p,t}$ and $\Psi_{j}$ are rational functions with integer coefficients. After some elementary transformations, we arrive exactly to the recurrence given in the first section. Finally, if we compare (\ref{tikr}) and (\ref{tikr2}), we arrive to the Conjecture \ref{conj2}. \\

Concerning the ``dual" result, we start from the equation
\begin{eqnarray}
\widehat{G}_{s}(\o,z+1)&=&\o \widehat{G}_{s}(\o,\o z)+\frac{\lambda_{1}(\o,s)}{(-z)^{s}}\cdot \widehat{G}_{s}\Big{(}\o,\frac{\o}{z}\Big{)},\label{vien}
\\ |\o-2|&\leq& 1,\quad z\notin\mathscr{I}_{\o},\quad\Re s>1.\nonumber
\end{eqnarray}
$\widehat{G}_{s}(\o,z)$ is a function of three complex variables! We expect that $\widehat{G}_{s}(1,-z)$ is the eigenfunction corresponding to the leading eigenvalue of $\mathcal{L}_{s}$. Note that different branches of a multi-valued function $(-z)^{s}$ differ by factors of the form $e^{2\pi iNs}$. Since we expect that $\widehat{G}_{s}(\o,z)(z-2)^{s}$ is single-valued (see below), the multi-valued nature of $(-z)^{s}$ does not cause complications, if properly defined.  All our subsequent formal calculations are based on the assumption that (up to normalization)
\begin{eqnarray*}
\widehat{G}_{s}(2,z)=\frac{1}{(z-2)^{s}}.\text{ This gives }\lambda_{1}(2,s)=2^{s}-2.
\end{eqnarray*}
For further steps we assume that the solution of (\ref{vien}) is given by
\begin{eqnarray*}
\widehat{G}_{s}(\o,z)=\sum\limits_{k=0}^{\infty}\frac{\widehat{P}_{s,k}(z)}{(z-2)^{s+k}}(\o-2)^{k},\quad
\widehat{P}_{s,k}(z)\text{ is polynomial in } z,\quad{\rm deg}(\widehat{P}_{s,k})\leq k.
\end{eqnarray*}
All the subsequent steps are analogous to the ones which lead to the Conjecture \ref{conj2}. Since the calculations match the known numerical data very well (including the calculations of the Khinchin-L\'{e}vy constant in the end of Section 1), the Conjecture \ref{conj3} should be true.
\appendix
\section{}
In this appendix we list various tables and figures. For the
convenience of referee and other interested readers, the MAPLE code
to compute rational functions $\Psi_{j}(\mathbf{X},\mathbf{Y})$ can
be downloaded from
\url{http://www.alkauskas.puslapiai.lt/MP3/gkw.txt}
\begin{figure}
\centering
\begin{tabular}{c c}

\epsfig{file=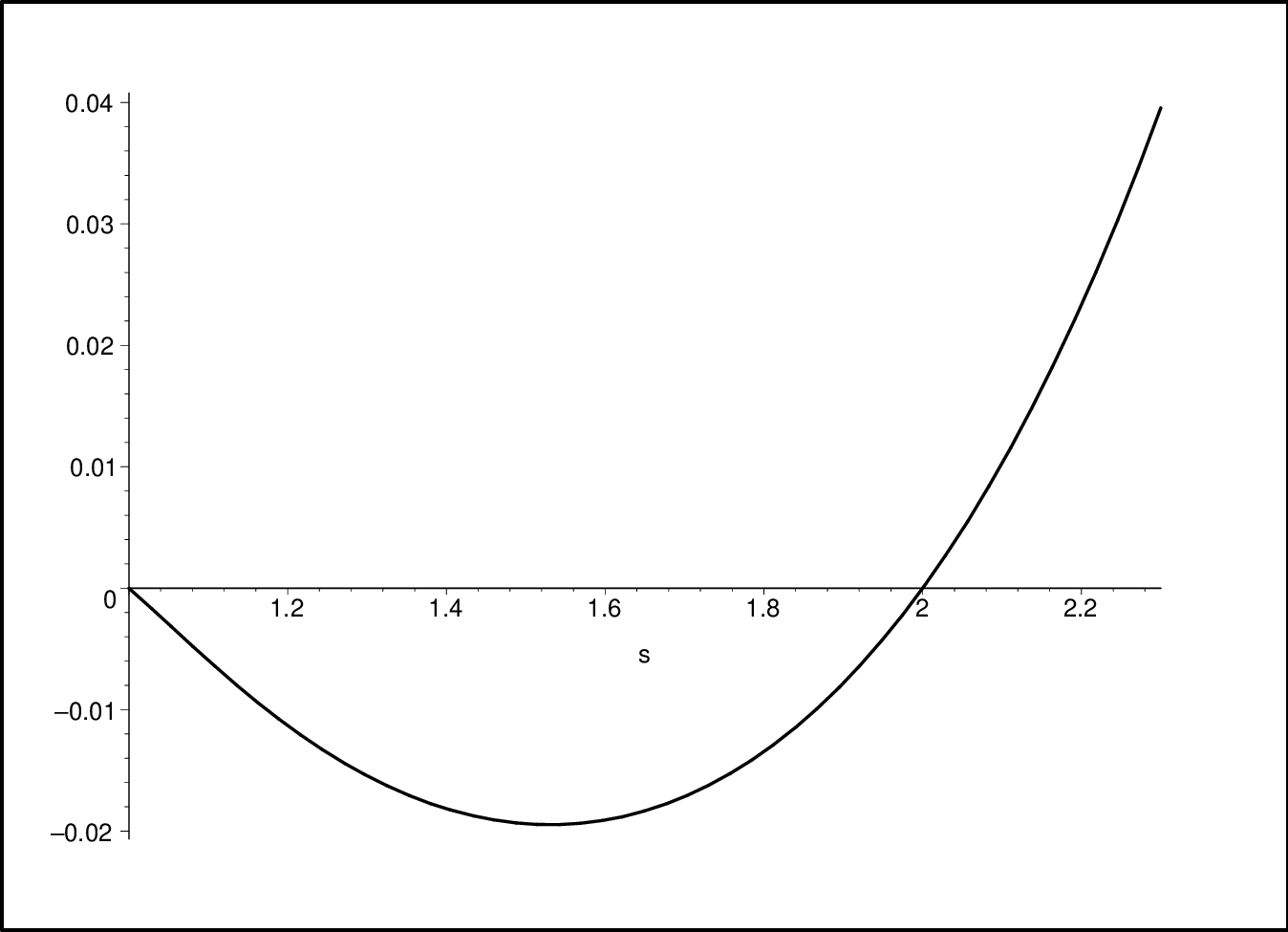,width=180pt,height=215pt,angle=-90}
&\epsfig{file=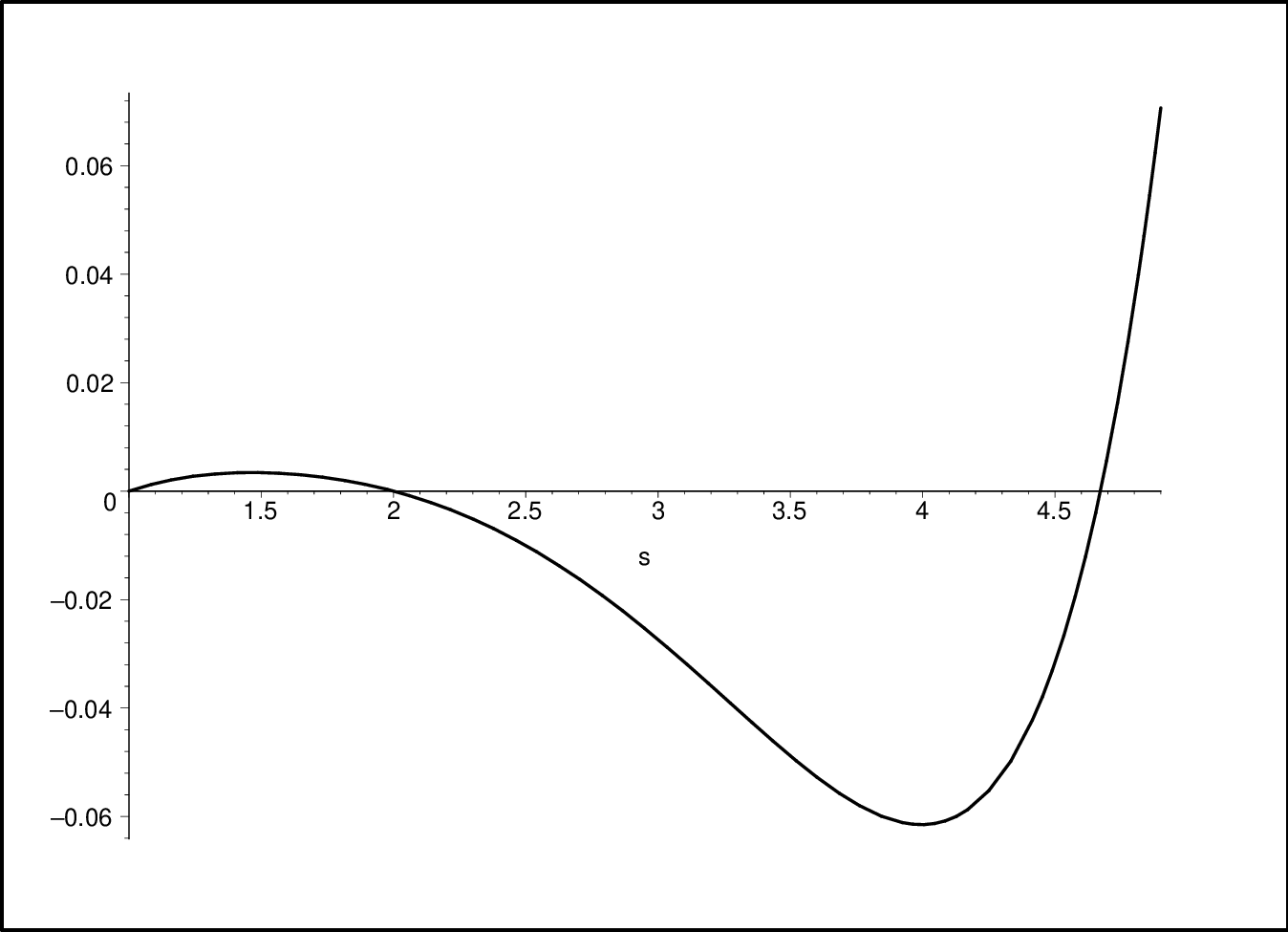,width=180pt,height=215pt,angle=-90} \\
$\Lambda_{2}(s-1,2^{s-1})$, $s\in[1,2.4]$ & $\Lambda_{3}(s-1,2^{s-1})$, $s\in[1,4.7]$\\
\epsfig{file=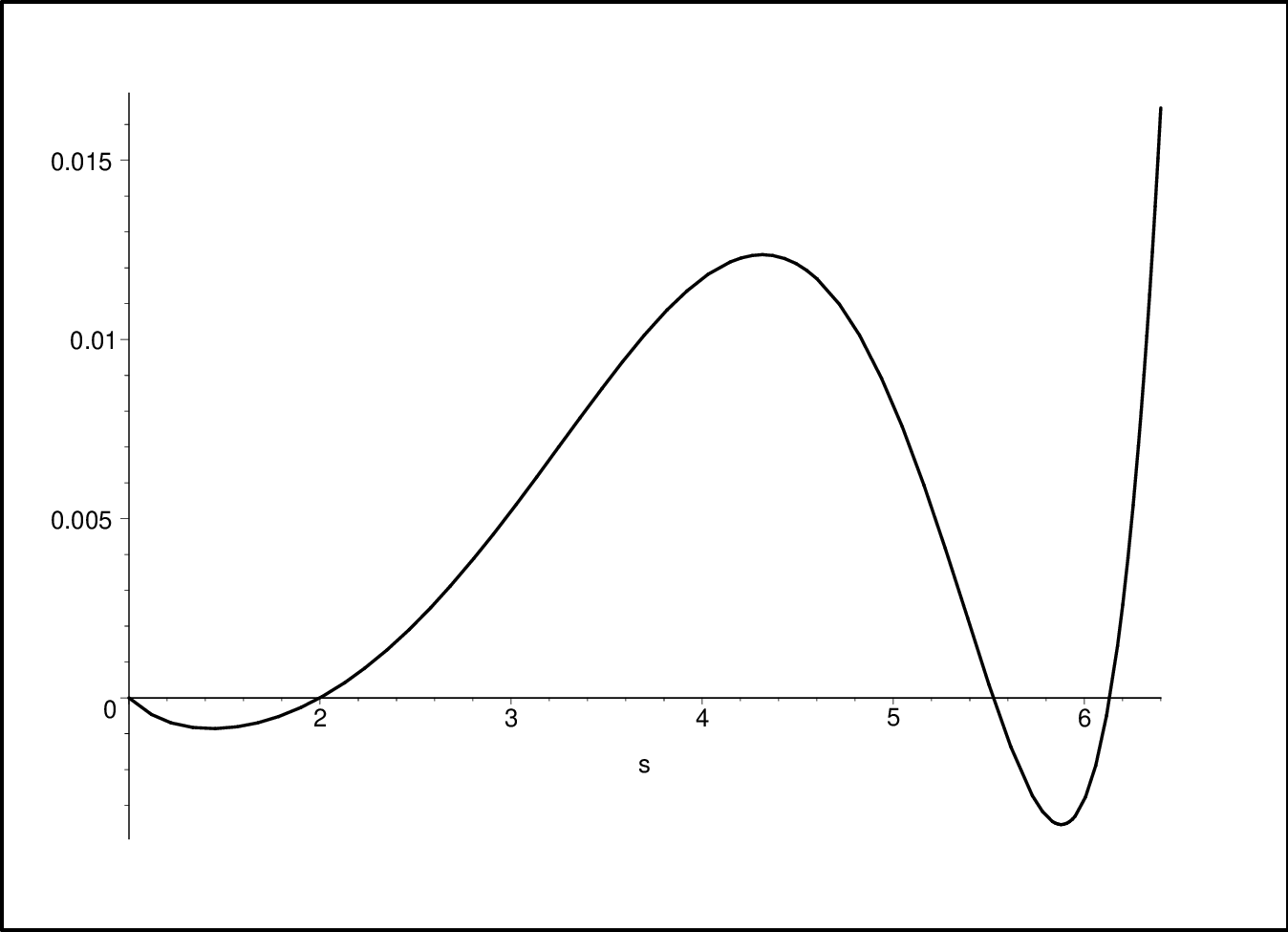,width=180pt,height=215pt,angle=-90} &
\epsfig{file=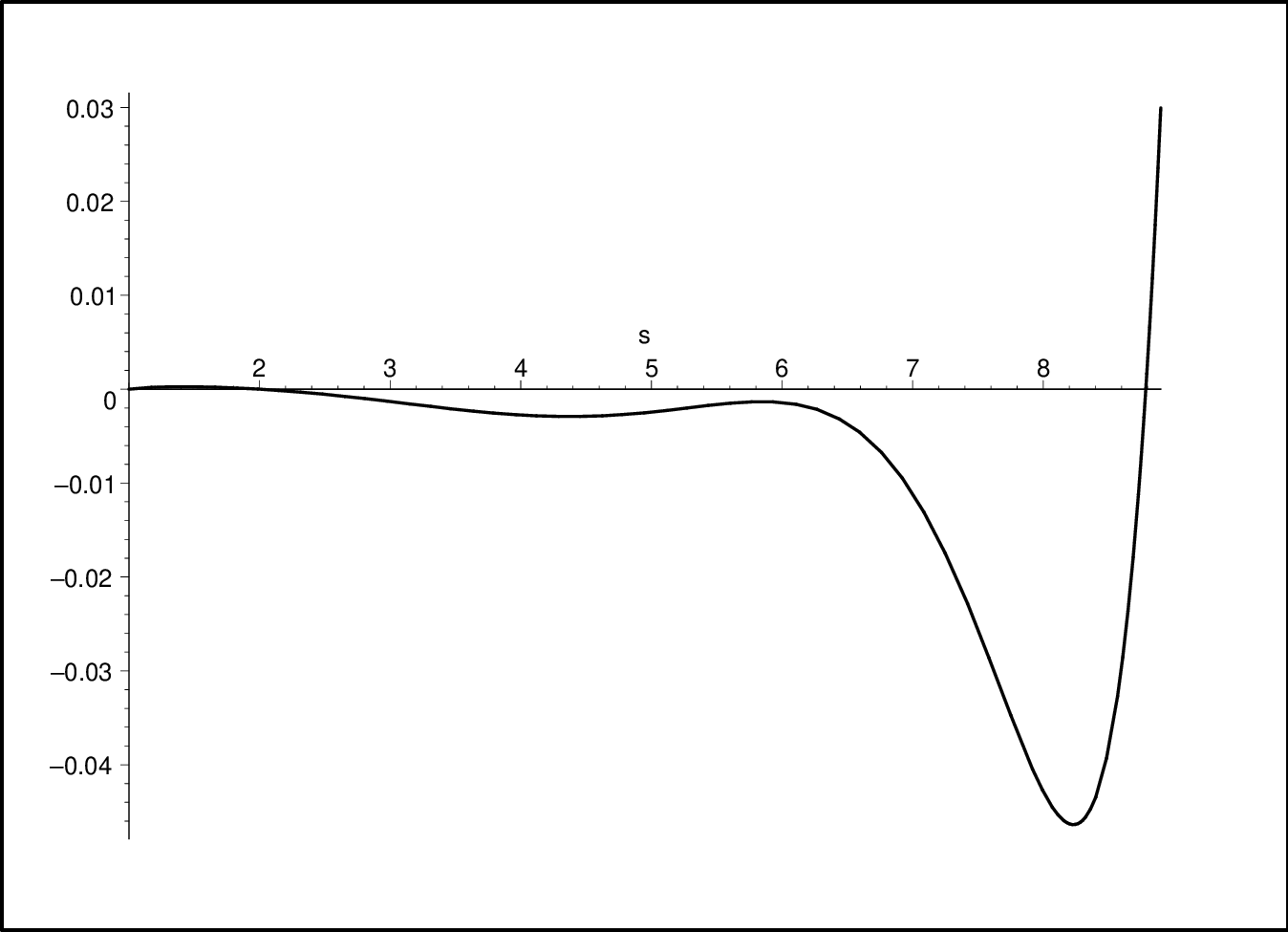,width=180pt,height=215pt,angle=-90}\\
$\Lambda_{4}(s-1,2^{s-1})$, $s\in[1,6.3]$ & $\Lambda_{5}(s-1,2^{s-1})$, $s\in[1,8.9]$\\
\epsfig{file=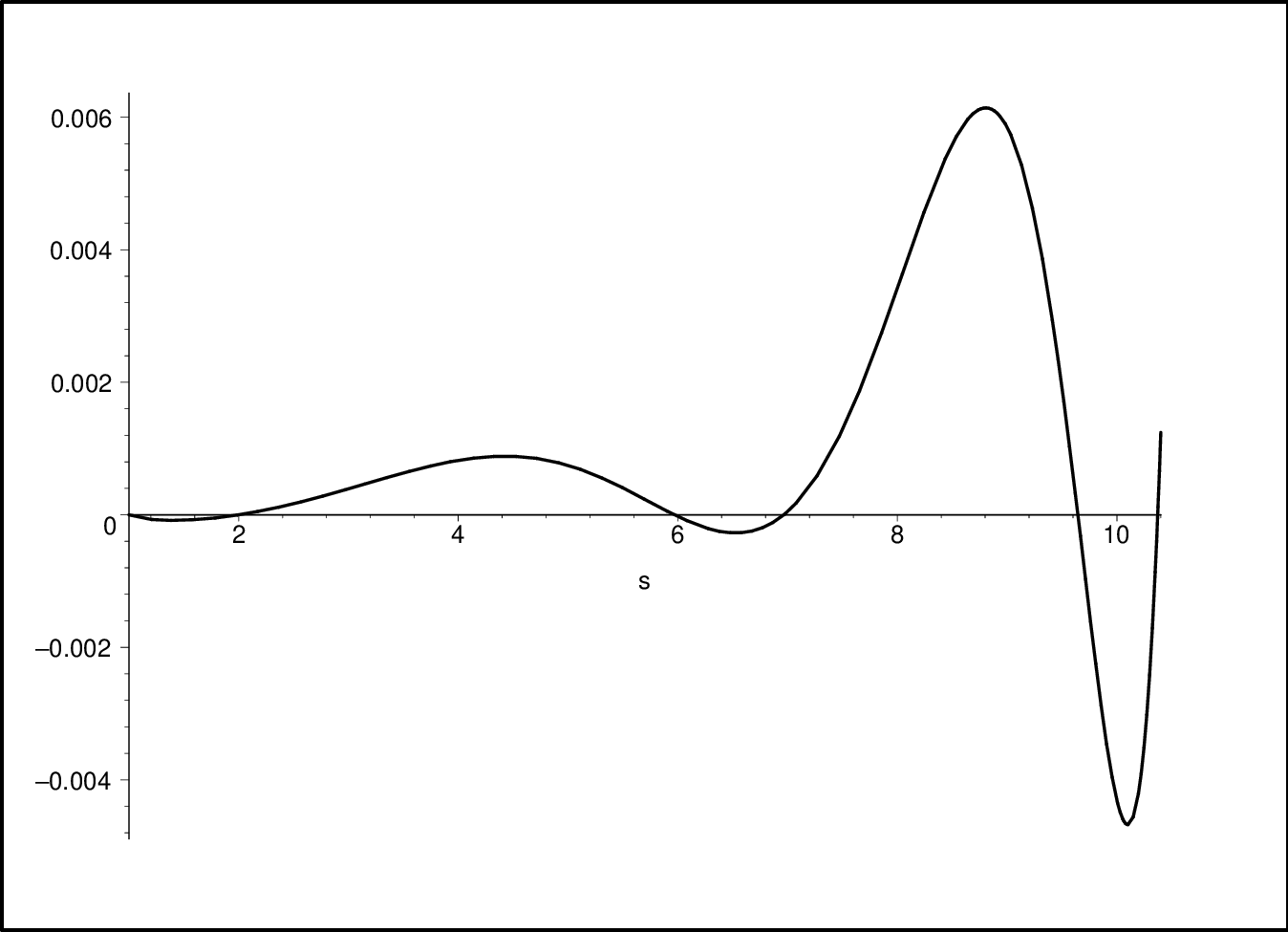,width=180pt,height=215pt,angle=-90} &
\epsfig{file=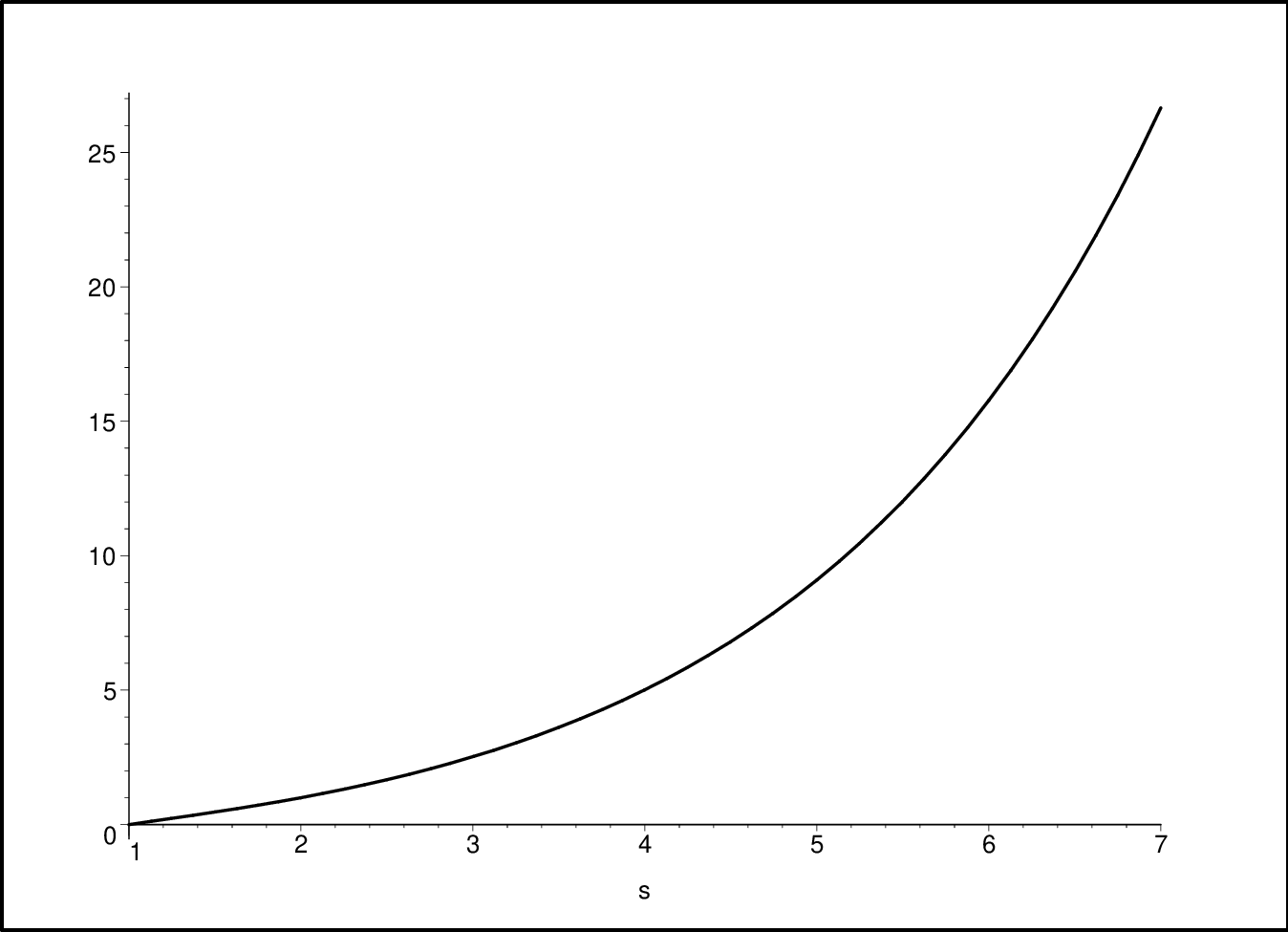,width=180pt,height=215pt,angle=-90}\\
$\Lambda_{6}(s-1,2^{s-1})$, $s\in[1,10.4]$ & $\sum_{j=0}^{8}(-1)^{j}\Lambda_{j}(s-1,2^{s-1})$, $s\in[1,7]$\\
\end{tabular}
\caption{Functions $\Lambda_{j}(s-1,2^{s-1})$}.
\end{figure}
\begin{center}
\noindent\begin{tabular}{||r || c ||} \hline
\multicolumn{2}{||c||}{\textbf{1. Rational functions $\Psi_{j}(\mathbf{X},\mathbf{Y})$}, $0\leq j\leq 3$.}\\
\hline
&\\
$0$&$2Y-2$\\
& \\
$1$&$\displaystyle\frac{-3XY^3+9Y^3+4X^2Y^2+6XY^2-27Y^2-4X^2Y-4XY+26Y-8}{(3Y-4)(3Y-2)}$\\
&  \\
$2$&$\scriptstyle(243X^2Y^7-405XY^7-216X^4Y^6+54X^2Y^6+2430XY^6+648X^4Y^5-216X^3Y^5-4212X^2Y^5-6804XY^5$\\
$ $&$\scriptstyle-720X^4Y^4+1584X^3Y^4+11400X^2Y^4+11016XY^4+288X^4Y^3-3008X^3Y^3-13248X^2Y^3-10336XY^3$\\
$ $&$\scriptstyle+2176X^3Y^2+7296X^2Y^2+5120XY^2-512X^3Y-1536X^2Y-1024XY)\times(12(3Y-4)^3(3Y-2)^3)^{-1}$\\
& \\
$3$&$\scriptstyle(-531441X^3Y^{13}-2125764X^2Y^{13}+4074381XY^{13}$\\
$ $&$\scriptstyle+209952X^6Y^{12}-2466936X^4Y^{12}+6377292X^3Y^{12}+15011568X^2Y^{12}-48892572XY^{12}$\\
$ $&$\scriptstyle-419904X^6Y^{11}+3726648X^5Y^{11}+18869436X^4Y^{11}+37747620X^3Y^{11}-24958044X^2Y^{11}+256710060XY^{11}$\\
$ $&$\scriptstyle-3779136X^6Y^{10}-33802272X^5Y^{10}-47530800X^4Y^{10}+161593056X^3Y^{10}-62997264X^2Y^{10}-774372960XY^{10}$\\
$ $&$\scriptstyle+22371552X^6Y^9+120885696X^5Y^9-16282944X^4Y^9-568577232X^3Y^9+260620416X^2Y^9+1465717680XY^9$\\
$ $&$\scriptstyle-57583872X^6Y^8-215488512X^5Y^8+406171584X^4Y^8+1568232576X^3Y^8-106759296X^2Y^8-1745883072XY^8$\\
$ $&$\scriptstyle+86686848X^6Y^7+175893120X^5Y^7-1202361408X^4Y^7-3156231744X^3Y^7-984602304X^2Y^7+1145164608XY^7$\\
$ $&$\scriptstyle-80727552X^6Y^6+25159680X^5Y^6+1970357760X^4Y^6+4467896064X^3Y^6+2532955392X^2Y^6-46116864XY^6$\\
$ $&$\scriptstyle+45333504X^6Y^5-202844160X^5Y^5-2057522688X^4Y^5-4360743936X^3Y^5-3125389824X^2Y^5-661966848XY^5$\\
$ $&$\scriptstyle-12926976X^6Y^4+209399808X^5Y^4+1402464256X^4Y^4+2862624768X^3Y^4+2283358208X^2Y^4+644493312XY^4$\\
$ $&$\scriptstyle-147456X^6Y^3-110641152X^5Y^3-607199232X^4Y^3-1205420032X^3Y^3-1008033792X^2Y^3-308854784XY^3$\\
$ $&$\scriptstyle+1245184X^6Y^2+31653888X^5Y^2+152535040X^4Y^2+294256640X^3Y^2+249659392X^2Y^2+78315520XY^2$\\
$ $&$\scriptstyle-262144X^6Y-3932160X^5Y-17039360X^4Y-31719424X^3Y-26738688X^2Y-8388608XY)$\\
$ $&$\scriptstyle\times(72(3Y-2)^5(3Y-4)^5(9Y-16)(9Y-2))^{-1}$\\
\hline
\end{tabular}\\
\end{center}
$ $\\

The following table lists $S_{N}(n)=\Big{(}\sum_{j=0}^{N}(-1)^{j}\Psi_{j}(n,2^n)\Big{)}^{-1}$ for certain values of $n$ and $N$. The correct digits of $(-1)^{n+1}\lambda_{n}$ are underlined.
\begin{center}
\noindent\begin{tabular}{||r || l |l|l|l||} \hline
\multicolumn{5}{||c||}{\textbf{2. Numerical values of $S_{N}(n)$}}\\
\hline
$N\backslash n$   & $2$ & $3$ & $4$ & $5$\\
\hline
& & & &\\
$5$&$\underline{0.303}59526888627_{+}$& $\underline{0.1008}4990161441_{+}$& $\underline{0.035}53807998183_{+}$& $\underline{0.012}91316972180_{+}$\\
& & & &\\
$10$&$\underline{0.30366}223674122_{+}$& $\underline{0.100884}16824324_{+}$& $\underline{0.03549}756178532_{+}$& $\underline{0.01284}672844801_{+}$\\
& & & &\\
$20$&$\underline{0.303663002}67057_{+}$& $\underline{0.100884509}98887_{+}$& $\underline{0.03549615}817871_{+}$& $\underline{0.0128437}8208157_{+}$\\
& & & &\\
$40$&$\underline{0.303663002898}81_{+}$& $\underline{0.1008845092}6615_{+}$& $\underline{0.035496159}13748_{+}$& $\underline{0.012843790}96146_{+}$\\
\hline
\end{tabular}\\
\end{center}

\begin{center}
\begin{tabular}{||r | c||r|c||}
\hline
\multicolumn{4}{||c||}{\textbf{3. The sequence $\Psi_{j}(2,4)$}, $0\leq j\leq 9$.}\\
\hline
& & & \\
$0$ & $\displaystyle 6$    &$5$&$\displaystyle\frac{2509823493}{2^4\cdot5^9\cdot13\cdot17^3}$\\
& & & \\
$1$ & $\displaystyle\frac{13}{5}$              &$6$&$\displaystyle-\frac{19855538966267}{2^6\cdot5^{11}\cdot13^2\cdot17^4}$\\
& & & \\
$2$ & $\displaystyle-\frac{11}{5^2}$   &$7$&$\displaystyle\frac{129127251417135911831}{2^8\cdot3\cdot5^{13}\cdot13^3\cdot17^5\cdot257}$\\
& & & \\
$3$ & $\displaystyle\frac{678}{5^5\cdot17}$  &$8$&$\displaystyle-\frac{662052024234553451842334613}{2^{10}\cdot5^{15}\cdot7\cdot13^4\cdot17^6\cdot257^2}$\\
& & & \\
$4$&$\displaystyle-\frac{74439}{2^2\cdot5^6\cdot17^2}$  &$9$&$\displaystyle\frac{781786025527978909165252890560522881}{2^{12}\cdot3^2\cdot5^{16}\cdot7^2\cdot13^5\cdot17^7\cdot41\cdot257^3}$\\
& & & \\
\hline
\end{tabular}\\
\end{center}

\begin{center}
\noindent\begin{tabular}{||r || c ||} \hline
\multicolumn{2}{||c||}{\textbf{4. Rational functions $\Lambda_{j}(\mathbf{X},\mathbf{Y})$}, $0\leq j\leq 3$.}\\
\hline
&\\
$0$&$2Y-2$\\
& \\
$1$&$\displaystyle\frac{XY^2+Y^2-3Y+2}{3Y-2}$\\
&  \\
$2$&$\scriptstyle(9X^2Y^4-3XY^4-12Y^4-22X^2Y^3+30XY^3+52Y^3-4X^2Y^2-60XY^2-56Y^2+8X^2Y+24XY+16Y)$\\
$ $&$\scriptstyle\times(12(3Y-2)^3)^{-1}$\\
& \\
$3$&$\scriptstyle(+243X^3Y^7-972X^2Y^7+81XY^7+1296Y^7-792X^3Y^6+2916X^2Y^6-3276XY^6-6984Y^6+3840X^3Y^5-3816X^2Y^5$\\
$ $&$\scriptstyle+1632XY^5+9288Y^5-2720X^3Y^4+10752X^2Y^4+13424XY^4-48Y^4-816X^3Y^3-13696X^2Y^3-20976XY^3$\\
$ $&$\scriptstyle-8096Y^3+1152X^3Y^2+7232X^2Y^2+11904XY^2+5824Y^2-256X^3Y-1408X^2Y-2432XY-1280Y)$\\
$ $&$\scriptstyle\times(72(3Y-2)^5(9Y-2))^{-1}$\\
\hline
\end{tabular}\\
\end{center}

The following table lists $L_{N}(s)=\Big{(}\sum_{j=0}^{N}(-1)^{j}\Lambda_{j}(s-1,2^{s-1})\Big{)}^{-1}$ for certain values of $s$ and $N$. The correct digits of $\lambda_{1}(s)$ are underlined. Apart from $\lambda_{1}(4)$, underlined digits are only those coinciding with the respective digits of the next entry. The constant $\lambda_{1}(4)$ and other eigenvalues of $\mathcal{L}_{4}$ appear in \cite{flajolet3} in connection with lattice reduction algorithm in dimension 2. The number $\lambda_{1}(4)$ is known as \emph{the Vall\'{e}e constant}  \cite{finch,flajolet4}.
\begin{center}
\noindent\begin{tabular}{||r || l |l|l|l||} \hline
\multicolumn{5}{||c||}{\textbf{5. Numerical values of $L_{N}(s)$}}\\
\hline
$N\backslash s$   & $5/2$ & $3$ & $4$ & $18$\\
\hline
& & & &\\
$5$&$\underline{0.599}17481197326_{+}$& $\underline{0.396}52695432136_{+}$& $\underline{0.199}50800589324_{+}$& $-0.0000863940590_{+}$\\
& & & &\\
$10$&$\underline{0.59908}463832611_{+}$& $\underline{0.39643}522729436_{+}$& $\underline{0.19945}914049411_{+}$& $\underline{0.00017329}459246_{+}$\\
& & & &\\
$20$&$0.59908399859453_{+}$& $0.39643461357311_{+}$& $\underline{0.19945881834}668_{+}$& $0.00017329515765_{+}$\\

\hline
\end{tabular}\\
\end{center}
$ $\\
As is predicted by the Conjecture \ref{conj3}, $\frac{1}{\lambda_{1}(4)}=\sum\limits_{j=0}^{\infty}(-1)^{j}\Lambda_{j}(3,8)$.
\begin{center}
\begin{tabular}{||r | c||r|c||}
\hline
\multicolumn{4}{||c||}{\textbf{6. The sequence $\Lambda_{j}(3,8)$}, $0\leq j\leq 9$.}\\
\hline
& & & \\
$0$ & $\displaystyle 14$    &$5$&$\displaystyle-\frac{43041548352866}{3^4\cdot5^4\cdot11^9\cdot131}$\\
& & & \\
$1$ & $\displaystyle\frac{117}{11}$              &$6$&$\displaystyle\frac{5090277810440554529}{3^4\cdot5^6\cdot11^{11}\cdot131^2}$\\
& & & \\
$2$ & $\displaystyle\frac{6280}{3\cdot11^3}$   &$7$&$\displaystyle-\frac{56022698078692317056550307}{3^6\cdot5^8\cdot11^{12}\cdot103\cdot131^3}$\\
& & & \\
$3$ & $\displaystyle-\frac{89128}{3^2\cdot11^5}$  &$8$&$\displaystyle\frac{78083190108525386193197430572483023}{2\cdot3^7\cdot5^{10}\cdot11^{14}\cdot17\cdot103^2\cdot131^4}$\\
& & & \\
$4$&$\displaystyle\frac{17097857}{3\cdot5^2\cdot11^7}$  &$9$&$\displaystyle-\frac{5692648747977502069023785944103249502105799841}{2\cdot3^7\cdot5^{12}\cdot11^{16}\cdot17^2\cdot103^3\cdot131^5\cdot293}$\\
& & & \\
\hline
\end{tabular}\\
\end{center}
$ $\\

\par\bigskip

\indent Institute of Mathematics, Department of Integrative Biology,
Universit\"{a}t f\"{u}r Bodenkultur Wien, Gregor Mendel-Stra{\ss}e 33, A-1180 Wien, Austria, \&\\
\indent Vilnius University, The Department of Mathematics and
Informatics, Naugarduko 24, Vilnius, Lithuania.\\
{\tt giedrius.alkauskas@gmail.com}\\


\begin{thebibliography}{9}
\bibitem{alkauskas}{\sc G. Alkauskas}, The Minkowski question mark function: explicit series for the dyadic period function and moments,
{\it Mathematics of Computation} {\bf 79} (269) (2010), 383--418.

\bibitem{babenko} {\sc K. I. Babenko}, A problem by Gauss, {\it Dokl. Akad.
    Nauk SSSR} {\bf 238} (5) (1978), 1021--1204; English translation: {\it
    Soviet Math. Dokl. } {\bf 19} (1) (1978), no. 1, 136--140.

\bibitem{briggs}{\sc K. Briggs} (2003), A precise computation of the
Gauss-Kuzmin-Wirsing constant, available electronically at: \url{http://keithbriggs.info/documents/wirsing.pdf}

\bibitem{calkin} {\sc N. Calkin, H. Wilf}, Recounting the rationals, {\it
    American Mathematical Monthly} {\bf 107} (2000), 360--363.

\bibitem{flajolet2} {\sc H. Daud\'{e}, Ph. Flajolet, B. Vall\'{e}e}, An average-case analysis of the Gaussian algorithm for lattice reduction,
      {\it Combinatorics, Probability and Computing} {\bf 6} (4) (1997), 397--433.

\bibitem{finch}{\sc S. R. Finch}, {\it Mathematical constants}, Encyclopedia of Mathematics and its Applications, 94. Cambridge University Press, Cambridge, 2003.

\bibitem{flajolet1} {\sc Ph. Flajolet, B. Vall\'{e}e} (1995), On the Gauss-Kuzmin-Wirsing constant, available electronically at:
\url{http://algo.inria.fr/flajolet/Publications/gauss-kuzmin.ps}


\bibitem{flajolet3} {\sc Ph. Flajolet, B. Vall\'{e}e}, Continued fraction algorithms, functional operators, and structure constants,
      {\it Theoretical Computer Science} {\bf 194} (1-2) (1998), 1--34.

\bibitem{flajolet4} {\sc Ph. Flajolet, B. Vall\'{e}e}, Continued fractions, comparison algorithms, and fine structure constants,
{\it Constructive, experimental, and nonlinear analysis (Limoges, 1999)},  53--82, CMS Conf. Proc., 27, Amer. Math. Soc., Providence, RI,  2000.

\bibitem{hensley} {\sc D. Hensley}, The number of steps in the Euclidean algorithm,
      {\it Journal of Number Theory} {\bf 49} (2) (1994), 142--182.

\bibitem{khinchin}{\sc A. Ya. Khinchin}, {\it Continued fractions}, The
    University of Chicago Press, 1964.

\bibitem{knuth}{\sc D. E. Knuth}, {\it The art of computer programming}, 2nd ed., vol 2: Seminumerical algorithms,
    Addison-Wesley, 1981.

\bibitem{zagier2}{\sc J. Lewis, D. Zagier}, Period functions and the Selberg zeta function for the modular group, {\it The mathematical beauty of physics (Saclay, 1996)},  83--97, Adv. Ser. Math. Phys., 24, {\it World Sci. Publ., River Edge, NJ}, 1997.

\bibitem{macleod}{\sc A. J. MacLeod}, High-accuracy numerical values in the Gauss-Kuz'min continued fraction problem,
{\it Computers \and Mathematics with Applications} {\bf 26} (3) (1993), 37--44.

\bibitem{mayer1}{\sc D. Mayer, G. Roepstorff}, On the relaxation time of Gauss's continued-fraction map. I. The Hilbert space approach (Koopmanism),
{\it Journal of Statistical Physics} {\bf 47} (1-2) (1987), 149--171.

\bibitem{mayer2}{\sc D. Mayer, G. Roepstorff}, On the relaxation time of Gauss' continued-fraction map. II. The Banach space approach (transfer operator method)., {\it Journal of Statistical Physics} {\bf 50} (1-2) (1988), 331--344.

\bibitem{mayer3}{\sc D. Mayer}, The thermodynamic formalism approach to Selberg's zeta function for ${\rm PSL}(2,\mathbb{Z})$, {\it Bulletin of the American Mathematical Society (New Series)} {\bf 25} (1) (1991), 55--60.

\bibitem{mayer4}{\sc D. Mayer}, On the thermodynamic formalism for the Gauss map, {\it Communications in Mathematical Physics} {\bf 130} (2) (1990), 311--333.

\bibitem{wirsing} {\sc E. Wirsing}, On the theorem of Gauss-Kusmin-L\'{e}vy and a Frobenius-type theorem for function spaces, {\it Acta Arithmetica}
{\bf 24} (1973/74), 507--528.

\bibitem{zagier1} {\sc D. Zagier} (2001), New points of view on the Selberg zeta function.

\end{thebibliography}
\end{document}